\documentclass[a4paper,11pt]{amsart}
\usepackage{hyperref}
\usepackage{todonotes}
\hypersetup{colorlinks=true,linkcolor=blue,citecolor=red}
\usepackage{comment}
\usepackage{graphicx} 
\usepackage{xcolor}
\usepackage{amssymb}
\usepackage{graphicx} 
\usepackage{tikz-cd}
\usepackage{stmaryrd}
\usepackage{amsmath}

\newcommand{\R}{\mathbb{R}}
\newcommand{\C}{\mathbb{C}}

\newcommand{\Z}{\mathbb{Z}}

\newcommand{\im}{\text{im}}

\newcommand{\del}{\partial}
\newcommand{\dbar}{\overline{\partial}}

\newcommand{\imm}{\operatorname{im}}

\newtheorem{lemma}{Lemma}[section]

\newtheorem{thm}{Theorem}[section]
\newtheorem*{thm*}{Theorem}

\title{Remarks on geometrically formal manifolds}

\usepackage{anysize}
\marginsize{2cm}{2cm}{2cm}{2cm}

\title[Non-trivial ABC-Massey products on complex parallelisable solvmanifolds]{Non-trivial ABC-Massey products on complex parallelisable solvmanifolds}

\author{Nunzia Cesarino}
\address[Nunzia Cesarino]{
Dipartimento di Matematica ``G. Vitali''\\
Università degli studi di Modena e Reggio Emilia\\
Via Giuseppe Campi, 213/b, 41125 Modena, Italy}
\email{nunzia.cesarino@unimore.it}

\author{Tommaso Sferruzza}
\address[Tommaso Sferruzza]{Dipartimento di Matematica ``Giuseppe Peano'' \\
Università di Torino\\
Via Carlo Alberto 10 \\
10123 Torino, Italy}
\email{tommaso.sferruzza@unito.it}

\author{Adriano Tomassini}
\address[Adriano Tomassini]{
Dipartimento di Scienze Matematiche, Fisiche e Informatiche\\
Unità di Mate\-matica e Informa\-tica\\
Università degli studi di Parma\\
Parco Area delle Scienze 53/A, 43124 Parma, Italy}
\email{adriano.tomassini@unipr.it}
\date{}
\keywords{ABC-Massey product; complex parallelisable; solvmanifold; Bott-Chern; Aeppli; complex unimodular solvable}

\thanks{The authors are partially supported by GNSAGA of INdAM. The second author was holder of a fully funded INdAM postdoctoral position.
The third author is partially supported by the Project PRIN 2022 ``Real and Complex Manifolds: Geometry and Holomorphic Dynamics 2022AP8HZ9''.}
\subjclass[2010]{53C15; 58A14}

\begin{document}

\begin{abstract}
    Triple Aeppli--Bott--Chern--Massey products, shortly, triple ABC-Massey products  are higher-order operations on the Bott--Chern and Aeppli cohomologies of a compact complex manifold, and their non-vanishing is an invariant of its pluripotential homotopy type. We prove that every non-Abelian complex unimodular solvable Lie algebra admits a non-vanishing triple ABC--Massey product, and we deduce that so does every compact complex parallelisable solvmanifold.
\end{abstract}
\maketitle
\tableofcontents

\section{Introduction}
Compact K\"ahler manifolds are the most important class of manifolds for which cohomological and metric properties interact in a deep and significant fashion. Indeed, 
on a compact K\"ahler manifold $X$ of complex dimension $n$, the Hodge decomposition
\[
H^{k}_{dR}(X;\C)\cong\bigoplus_{p+q=k}H^{p,q}_{\dbar}(X),\qquad \overline{H^{p,q}_{\dbar}(X)}\cong H^{q,p}_{\dbar}(X),
\]
holds, the Fr\"olicher spectral sequence degenerates at the first page, and the hard Lefschetz theorem forces strong numerical constraints on the Betti numbers. The relevant complex property satisfied by any compact K\"ahler manifold is the $\del\dbar$-Lemma, which also has purely homotopy-theoretic consequences: by \cite{DGMS}, the de Rham differential graded algebra of a compact K\"ahler manifold is formal in the sense of Sullivan \cite{Sull73}, so that its real homotopy type is determined by the cohomology ring and all Massey products vanish; likewise the Dolbeault bigraded algebra is formal in the sense of Neisendorfer and Taylor \cite{NT}.\\
On the other hand, many classes of compact complex manifolds are not K\"ahler. Among them, a key role is played by compact quotients of nilpotent, and more generally solvable, Lie groups, namely \emph{nilmanifolds} and \emph{solvmanifolds}, endowed with an invariant complex structure. Therefore, it is natural to study cohomological invariants of $X$ which obstruct the existence of further structures.\\
In this direction, Bott-Chern and Aeppli cohomologies provide useful tools in the study of compact non-K\"ahler complex manifolds, as we recall.\\
Let  $X=(M,J)$ be a compact complex manifold. The \emph{Bott-Chern} and \emph{Aeppli cohomologies} of $X$, defined by
\[
H_{BC}^{\bullet,\bullet}(X)=\frac{\ker \del \cap \ker \dbar \cap \mathcal{A}^{\bullet,\bullet}(X) }{\im\del\dbar\cap \mathcal{A}^{\bullet,\bullet}(X)}, \qquad H_{A}^{\bullet,\bullet}(X)=\frac{\ker \del\dbar \cap \mathcal{A}^{\bullet,\bullet}(X) }{(\im\del+\im \dbar)\cap \mathcal{A}^{\bullet,\bullet}(X)},
\]
play a fundamental role as holomorphic invariants in the study of the geometry of $X$ . On the one hand, for instance, their additive structure gives a numerical necessary and sufficient condition for $X$ to satisfy the $\del\dbar$-lemma \cite{AT-0} and they provide a finer holomorphic invariant than Dolbeault cohomology.

On the other hand, the multiplicative structure of the Bott-Chern and Aeppli cohomologies and their related homotopic  invariants have been intensively studied in the recent years, see, e.g., \cite{AT-1,Tar,ST22,Stelzig,PSZ,ST1,MS24,PSZ25,Rub26,ces,ST25,CRS,MagGS,SG26}.

A central role in \emph{pluripotential homotopy theory} \cite{Stelzig}, is played by the notion of triple ABC--Massey products, which had been introduced in \cite{AT-1} and generalized in \cite{Tar,MS24} as natural adaptation of classical Massey products to the context of Bott-Chern and Aeppli cohomologies. A similar construction for Dolbeault homotopy theory had been introduced in \cite{Tor}.\\
We briefly recall their definition.\\
Let $\mathfrak {a}_{12}=[\alpha_{12}]\in H_{BC}^{p,q}(X)$, $\mathfrak {a}_{23}=[\alpha_{23}]\in H_{BC}^{r,s}(X)$, and $\mathfrak {a}_{34}=[\alpha_{34}]\in H_{BC}^{u,v}(X)$ such that 
$$
\mathfrak {a}_{12}\cup\mathfrak {a}_{23}=0,\qquad \mathfrak {a}_{23}\cup\mathfrak {a}_{34}=0,
$$
that is, 
$$
(-1)^{p+q}\alpha_{12}\wedge\alpha_{23}=\del\dbar f_{13}, \qquad (-1)^{r+s}\alpha_{23}\wedge\alpha_{34}=\del\dbar f_{24}
$$
for some forms $f_{13},\,f_{24}$. Then, the \emph{triple $ABC$-Massey product} $\langle\mathfrak {a}_{12},\mathfrak {a}_{23},\mathfrak {a}_{34}\rangle_{ABC}$ is defined as
\begin{eqnarray*}
\langle\mathfrak {a}_{12},\mathfrak {a}_{23},\mathfrak {a}_{34}\rangle_{ABC}&=&[(-1)^{p+q}\alpha_{12}\wedge f_{24}-(-1)^{r+s}f_{13}\wedge\alpha_{34}]\\[5pt]
&\in& \frac{H_A^{p+r+u-1,q+s+v-1}(X)}{\mathfrak {a}_{12}\cup H_A^{r+u-1,s+v-1}(X)+ H_A^{p+r-1,q+s-1}(X)\cup\mathfrak {a}_{34}}.
\end{eqnarray*}

Together with their generalizations \cite{MS24, Tar}, non-vanishing triple ABC-Massey products  cohomological invariants define pluripotential higher order operations \cite{CRS} that obstruct the existence of special Hermitian metrics.\\
Indeed, let us fix $g$ a Hermitian metric on $X$. Let us denote by
\begin{equation*}
\Delta_{BC}:= \del\dbar\dbar^*\del^* +\dbar^*\del^*\del\dbar + \dbar^*\del \del^*\dbar + \del^*\dbar\dbar^*\del +\dbar^*\dbar +\del^*\del,
\end{equation*}
the \emph{Bott-Chern Laplacian}, a fourth order self-adjoint elliptic operator, and by
\[
\mathcal{H}_{\Delta_{BC}}^{\bullet,\bullet}(X)=\ker\Delta_{BC}\cap \mathcal{A}^{\bullet,\bullet}(X)
\]
the space of \emph{Bott-Chern harmonic forms}. In particular, Hodge Theory \cite{Schw} assures that
\[
H_{BC}^{\bullet,\bullet}(X)\cong \mathcal{H}_{\Delta_{BC}}^{\bullet,\bullet}(X).
\]
Then, a Hermitian metric $g$ on $X$ is said to be  \emph{geometrically Bott-Chern formal} if $(\mathcal{H}_{\Delta_{BC}}^{\bullet,\bullet}(X),\wedge)$ is an algebra \cite{AT-1}. The existence of such a metric restricts heavily the pluripotential homotopy type of $X$, as it forces every triple ABC-Massey product to vanish.

A key difference of pluripotential homotopy theory from the classical setting is that the non-vanishing of ABC--Massey products is compatible with the $\partial\bar{\partial}$-Lemma \cite{PSZ, ST22}. That is, compact complex manifolds satisfying the $\partial\bar{\partial}$-Lemma may still carry non-trivial ABC--Massey products. Furthermore, there exist classes of manifolds on which every triple ABC-Massey product vanishes, although they are not classically formal. For instance, nilmanifolds are not formal in the sense of Sullivan \cite{Hasegawa 2,BG1} nor Dolbeault formal if they are endowed with an invariant complex structure \cite{SV26}, yet there exist families of nilmanifolds with invariant complex structures with geometrically Bott-Chern formal metrics, see \cite{ST1} and Section 3. These features strongly motivate the study of ABC--Massey products and pluripotential homotopy theory in complex geometry.\\
\indent In this paper, we investigate the formality properties of a natural class of compact non-K\"{a}hler complex manifold, namely, compact quotients $\Gamma\backslash G$ of simply connected $2n$-dimensional Lie groups $G$ by a lattice $\Gamma$, endowed with an invariant complex structure $J$. Such manifolds provide a particularly interesting setting in literature, since many cohomological computations can be reduced to algebraic linear equations \cite{Ang,AK}.\\
We initially assume that $G$ is nilpotent and $h_{BC}^{1,0}(X)=n-1$. By working at the level of the Lie algebra $\mathfrak{g}$ of $G$, we first prove (Lemma \ref{lemma:complexspecialtype}) that $J$ must be a \emph{special type complex structure}, i.e., $G$ admits a left-invariant coframe of $(1,0)$-forms $\{\phi^1,\dots, \phi^n\}$ such that
\begin{equation*}\begin{cases}
    d\phi^{j}=0,\qquad j\in \{1,\dots, n-1\}\\
    d\phi^{n}\in \text{Span}_{\mathbb{C}}\langle \phi^{ij}, \phi^{i\overline{j}} \rangle_{i,j=1,\dots, n-1}.
\end{cases}
\end{equation*}
By making use of a result from \cite{ST1}, we can relate the existence of geometrically Bott-Chern formal metrics and \emph{strong K\"ahler with torsion metrics} \cite{Bis} (shortly, \emph{SKT metrics}), i.e., Hermitian metrics $g$ whose fundamental forms $\omega(\cdot,\cdot)=g(J\cdot,\cdot)$ are $\del\dbar$-closed. SKT metrics are a natural generalization of the K\"ahler condition and hold a crucial role in the context of K\"ahler geometry with torsion. Here, we prove the following.
\begin{thm*}[Theorem \ref{thm:complex-special-type}]
Let $X=(\Gamma\backslash G,J)$ be a $2n$-dimensional nilmanifold endowed with an invariant complex structure $J$ and let $h^{1,0}_{BC}(X)=n-1$. Then, any invariant Hermitian metric on $(\Gamma\backslash G,J)$ is SKT if and only if it is geometrically Bott-Chern formal.
\end{thm*}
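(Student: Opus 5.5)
The plan is to work entirely at the level of the Lie algebra $\mathfrak g$, via invariant forms. This is legitimate because on nilmanifolds with invariant complex structure the Bott--Chern cohomology is computed by invariant forms in the cases needed here; alternatively, for invariant metrics the harmonic forms in low degree are invariant. By Lemma~\ref{lemma:complexspecialtype}, the hypothesis $h^{1,0}_{BC}(X)=n-1$ forces $J$ to be of special type. So we fix a coframe $\{\phi^1,\dots,\phi^n\}$ with $d\phi^j=0$ for $j<n$, and
\[
d\phi^n=\sum_{i,j<n}\bigl(a_{ij}\phi^{ij}+b_{ij}\phi^{i\bar j}\bigr).
\]
Since $\mathfrak g$ is nilpotent and $d^2=0$ holds automatically, there is no constraint on the coefficients $a_{ij}$ and $b_{ij}$.

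The result quoted from \cite{ST1} should characterise geometric Bott--Chern formality of an invariant metric $g$ through a condition on the wedge products of Bott--Chern harmonic forms. The key steps are as follows.
\begin{itemize}
\item[(i)] Write a general invariant Hermitian metric as $\omega=i\sum g_{k\bar l}\phi^k\wedge\bar\phi^l$. Compute $\del\dbar\omega$. Only terms involving $\phi^n$ contribute, and one gets
\[
\del\dbar\omega = i\,g_{n\bar n}\,\del\phi^n\wedge\dbar\bar\phi^n + (\text{terms from } \dbar\phi^n\wedge\del\bar\phi^n \text{ and cross terms } g_{k\bar n}).
\]
This gives explicitly the SKT condition as a polynomial identity in the structure constants and the $g_{k\bar l}$.
\item[(ii)] Compute the space $\mathcal H_{\Delta_{BC}}^{\bullet,\bullet}$ of harmonic forms with respect to $g$. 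The forms $\phi^j,\bar\phi^j$ for $j<n$ are $d$-closed, hence Bott--Chern classes. Their harmonic representatives, and those of their products, are of the form "closed form minus a correction in $\operatorname{im}\del\dbar$".
\item[(iii)] Identify where the algebra property can fail. Because only $\phi^n$ is non-closed, the products of harmonic forms fail to be harmonic exactly when $\del^*$ or $\dbar^*$ of such a product is nonzero. Via Hodge star duality, this reduces to the vanishing of the $\del\dbar$-exact part detected by pairing with $\del\dbar(\phi^n\wedge\bar\phi^n\wedge\cdots)$.
\end{itemize}

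We then compare the two sides. The condition that $\mathcal H_{BC}$ be closed under $\wedge$ should reduce, via $*$ and the identity $\langle\alpha,\del\dbar\beta\rangle=\langle\dbar^*\del^*\alpha,\beta\rangle$, to the vanishing of a single scalar. That scalar should be a metric contraction of $\del\phi^n\wedge\dbar\bar\phi^n$ (together with the $\dbar\phi^n\wedge\del\bar\phi^n$ terms) against $\omega^{n-2}$, or against the relevant components. This is precisely the quantity appearing in $\del\dbar\omega$ from step (i). Concretely, the plan is to show that both conditions are equivalent to
\[
\del\dbar\omega=0 \iff \sum \bigl(\text{Hermitian quadratic form in }(a_{ij},b_{ij})\text{ w.r.t. } g\bigr)=0 .
\]
It may be convenient first to change coframe within the special type class, so that $g$ is diagonal on $\mathrm{span}\{\phi^1,\dots,\phi^{n-1}\}$ and $\phi^n\perp\phi^j$. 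The span of the closed $\phi^j$ is intrinsic, and we can replace $\phi^n$ by $\phi^n-\sum c_j\phi^j$ without changing the special type form. This kills the cross terms $g_{k\bar n}$.

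The main obstacle is step (iii): determining the harmonic forms in all bidegrees and showing that the only obstruction to the algebra property is the SKT scalar. We will lean on the criterion of \cite{ST1} to avoid a full computation, checking its hypothesis degree by degree. We expect only the products of the harmonic $(1,1)$- and $(1,0)$/$(0,1)$-type classes built from $\phi^j$, $j<n$, to be critical. Higher-degree cases should follow by duality with respect to $*$ and by the fact that $\phi^n\wedge\bar\phi^n$ appears linearly.
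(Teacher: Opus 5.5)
Your reduction to a special-type coframe via Lemma \ref{lemma:complexspecialtype} is the paper's first step. The decisive step, however, is never proved: that for special-type structures an invariant metric is SKT if and only if $\mathcal{H}_{\Delta_{BC}}^{\bullet,\bullet}(\mathfrak g)$ is closed under $\wedge$. You flag step (iii) as ``the main obstacle'' and defer to an unspecified ``criterion of \cite{ST1}'' to be checked degree by degree. The paper closes exactly this step by quoting \cite[Remark 3.3]{ST1}, which states the equivalence for special-type complex structures. It then uses that such structures are nilpotent, so Angella's theorem gives $H_{BC}^{\bullet,\bullet}(\mathfrak g)\cong H_{BC}^{\bullet,\bullet}(X)$ and hence $\mathcal{H}_{\Delta_{BC}}(X)=\mathcal{H}_{\Delta_{BC}}(\mathfrak g)$ for invariant metrics. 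Without that citation or an actual argument, your proposal is a plan, not a proof.

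The heuristics you set up would also mislead you:
\begin{itemize}
\item[(a)] SKT is not a single scalar condition. The forms $\phi^{k\bar l}$, $k,l<n$, are closed, and $\del\dbar(\phi^k\wedge\bar\phi^n)=\phi^k\wedge\del\dbar\bar\phi^n=0$, likewise for $\phi^n\wedge\bar\phi^k$. Hence
\[
\del\dbar\omega=i\,g_{n\bar n}\,\Theta,\qquad \Theta:=\del\dbar(\phi^n\wedge\bar\phi^n)=\dbar\phi^n\wedge\del\bar\phi^n-\del\phi^n\wedge\dbar\bar\phi^n .
\]
So SKT means the vanishing of the $(2,2)$-form $\Theta$, independently of $g$, and your coframe normalisation is unnecessary. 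A contraction against $\omega^{n-2}$ is in general strictly weaker once $n\ge 4$.
\item[(b)] In (iii), a wedge product of Bott--Chern harmonic forms is automatically $\del$- and $\dbar$-closed. It can fail to be harmonic only through $\del\dbar\ast(\alpha\wedge\beta)\neq0$, i.e.\ a component along $\imm\del\dbar$. Non-vanishing of $\del^*$ or $\dbar^*$ is irrelevant.
\end{itemize}

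The missing idea makes the equivalence short. Let $\Lambda$ be the subalgebra generated by $\phi^j,\bar\phi^j$, $j<n$; all its elements are $d$-closed. For $\alpha_i\in\Lambda$ one checks
\[
\del\dbar\bigl(\alpha_0+\phi^n\wedge\alpha_1+\bar\phi^n\wedge\alpha_2+\phi^n\wedge\bar\phi^n\wedge\alpha_3\bigr)=\Theta\wedge\alpha_3 .
\]
If $\Theta=0$, then $\del\dbar\equiv0$ on all invariant forms. So $\mathcal{H}_{\Delta_{BC}}=\ker\del\cap\ker\dbar$, which is an algebra by Leibniz.

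Conversely, each $\phi^j,\bar\phi^j$ ($j<n$) is Bott--Chern harmonic for every metric, by bidegree: $\del\dbar\ast\phi^j\in\bigwedge^{n,n+1}\mathfrak g^*=0$, and similarly for $\bar\phi^j$. If harmonic forms form an algebra, then $\Theta\in\Lambda$, being a polynomial in these $1$-forms, is harmonic. It is also $\del\dbar$-exact, hence $\Theta=0$ by Lemma \ref{HodgeDec}.

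Finally, your justification for working on $\mathfrak g$ (``harmonic forms in low degree are invariant'') does not suffice, since formality involves products in all bidegrees. You need $\mathcal{H}_{\Delta_{BC}}^{\bullet,\bullet}(X)=\mathcal{H}_{\Delta_{BC}}^{\bullet,\bullet}(\mathfrak g)$ in every bidegree. That is available here precisely because special-type structures are nilpotent, not for arbitrary invariant complex structures.
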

In the second part of the paper, we consider the class of \emph{compact complex parallelisable manifolds}. These are characterized as compact quotients of complex Lie groups $G$ by lattices $\Gamma$ \cite{Wang}. In particular, we will assume that the complex Lie groups $G$ are solvable, namely, we will focus on \emph{complex parallelisable solvmanifolds}.\\
The main result of this section proves that compact complex parallelisable solvmanifolds always carry non-trivial triple ABC--Massey products. We break the proof in two separate steps.  First, we focus on complex parallelisable nilmanifolds, i.e., complex parallelizable solvmanifolds $\Gamma\backslash G$ with $G$ a complex nilpotent Lie group. The argument for both cases relies on a general procedure to produce non-vanishing triple ABC-Massey products at the level of the Lie algebra of $G$. Then, thanks to \cite[Lemma 2.1]{ST1}, such a non vanishing triple ABC-Massey product corresponds to a non-vanishing one on the corresponding complex parallelizable solvmanifold. More precisely, we prove the following
\begin{thm*}[Theorem \ref{thm:ABC-unimod-abel-nil}]
Let $X$ be a compact complex parallelisable nilmanifold. Then $X$ admits a non-vanishing triple ABC--Massey product.
\end{thm*}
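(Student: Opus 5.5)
Throughout, $X=\Gamma\backslash G$ with $G$ non-Abelian; for a complex torus $X$ is K\"ahler, so the statement is meant for the non-Abelian case. The plan is to construct an explicit non-vanishing triple ABC--Massey product on the complexified Lie algebra $\mathfrak g_\C$ of $G$, using invariant forms, and then transfer it to $X$ via \cite[Lemma 2.1]{ST1}. For that transfer I will need the standard fact that on complex parallelisable nilmanifolds the Bott--Chern and Aeppli cohomologies are computed by invariant forms.

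\emph{Structure equations.} Let $\{\phi^1,\dots,\phi^n\}$ be a basis of left-invariant holomorphic $(1,0)$-forms. Since $G$ is a complex Lie group, $\dbar\phi^k=0$ and $d\phi^k=\del\phi^k\in\Lambda^{2,0}$, with constant coefficients; conjugating gives $\del\bar\phi^k=0$ and $\dbar\bar\phi^k=\overline{\del\phi^k}$. Because $\mathfrak g$ is nilpotent and non-Abelian, I pick $X,Y$ with $Z=[X,Y]\neq0$ central, and choose the basis adapted to a Malcev-type filtration so that
\[
d\phi^1=d\phi^2=0,\qquad d\phi^3=\phi^{12}+(\text{terms not involving }\phi^3).
\]
After a further change of basis I aim for $d\phi^3=\phi^{12}$ exactly, or at least for $\phi^{12}$ being the only term that matters in the pairing below.

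\emph{Construction.} Every closed holomorphic $1$-form is $\del$- and $\dbar$-closed, so it defines a Bott--Chern class, and so does its conjugate and any wedge product of such forms. I take
\[
\alpha_{12}=\phi^1,\qquad \alpha_{23}=\phi^2\wedge\bar\phi^1\wedge\bar\phi^2,\qquad \alpha_{34}=\phi^2 .
\]
The two cup products vanish for the following reasons.
\begin{itemize}
\item $\alpha_{23}\wedge\alpha_{34}=0$ identically, so $f_{24}=0$.
\item For $\alpha_{12}\wedge\alpha_{23}=\phi^{12}\wedge\bar\phi^{12}$, a direct computation gives $\del\dbar(\phi^3\wedge\bar\phi^3)=\pm\,\phi^{12}\wedge\bar\phi^{12}$, so $f_{13}=\pm\phi^3\wedge\bar\phi^3$.
\end{itemize}
Hence the Massey product is represented by
\[
\pm\,\phi^3\wedge\bar\phi^3\wedge\phi^2\in\mathcal A^{2,1},
\]
which is $\del\dbar$-closed and so defines an Aeppli class. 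I keep in mind that the factors of this triple may need adjusting (for instance replacing $\alpha_{34}$ by another closed $(1,0)$- or $(0,1)$-form) if the class turns out to be trivial.

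\emph{Non-triviality, the main obstacle.} I must show that the class above is non-zero in
\[
H_A^{2,1}\big/\bigl([\phi^1]\cup H_A^{1,1}+H_A^{1,1}\cup[\phi^2]\bigr).
\]
I plan to use the non-degenerate pairing $H_A^{p,q}\times H_{BC}^{n-p,n-q}\to\C$ given by integration, which at the invariant level is just extraction of the coefficient of the volume form. The goal is a Bott--Chern closed invariant form $\eta$ of bidegree $(n-2,n-1)$ with two properties:
\begin{itemize}
\item $\int\phi^{32}\wedge\bar\phi^3\wedge\eta\neq0$;
\item $\phi^1\wedge\eta$ and $\phi^2\wedge\eta$ are both $\del\dbar$-exact, or at least annihilate every Aeppli class of bidegree $(1,1)$.
\end{itemize}
The second property kills the indeterminacy. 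The natural candidate is built from the wedge of all $\phi^k$ with $k\neq 2,3$ and all $\bar\phi^k$ with $k\neq3$, suitably corrected so that it is Bott--Chern closed. Verifying the closedness and the vanishing against the indeterminacy is where the nilpotent structure constants enter, and I expect this bookkeeping, together with the possible modification of the triple, to be the hardest step. Once an invariant non-vanishing product is obtained, \cite[Lemma 2.1]{ST1} gives the non-vanishing triple ABC--Massey product on $X$.
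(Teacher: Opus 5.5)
Your argument has a genuine gap at the normalisation $d\phi^3=\phi^{12}$, and the definition of your triple depends on it. For holomorphic $\phi^i$ one has $\del\dbar(\phi^i\wedge\bar\phi^j)=-d\phi^i\wedge\overline{d\phi^j}$. So $\del\dbar$ applied to invariant $(1,1)$-forms has image exactly the span of the forms $d\phi^i\wedge\overline{d\phi^j}$. This span contains $\phi^{12}\wedge\bar\phi^{12}$ if and only if $\phi^{12}$ is itself $d$ of a $(1,0)$-form. Two closed holomorphic $1$-forms with exact wedge product need not exist. On the complex Heisenberg algebra of dimension $5$ ($d\phi^5=\phi^{12}+\phi^{34}$, all other $d\phi^i=0$), every exact $(2,0)$-form is a multiple of the rank-$4$ form $\phi^{12}+\phi^{34}$, whereas $\psi^1\wedge\psi^2$ is decomposable. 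Hence $[\psi^1]\cup[\psi^2\wedge\bar\psi^1\wedge\bar\psi^2]\neq0$ in $H^{2,2}_{BC}$ for every pair of independent closed $\psi^1,\psi^2$, and your Massey product is not defined. Your fallback (``$\phi^{12}$ being the only term that matters in the pairing'') does not help: the cross terms in $d\phi^3\wedge\overline{d\phi^3}$ are exactly what destroy $\del\dbar$-exactness.

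Even where $d\phi^3=\phi^{12}$ can be arranged, non-triviality is only a plan, and for your triple it can fail. Take the complexification of $(0,0,12,13,23)$, i.e.\ $d\phi^3=\phi^{12}$, $d\phi^4=\phi^{13}$, $d\phi^5=\phi^{23}$. Your representative $\pm\phi^3\wedge\bar\phi^3\wedge\phi^2=\pm\phi^{23}\wedge\bar\phi^3$ equals $\pm\del(\phi^5\wedge\bar\phi^3)$, so it is Aeppli exact. Exchanging $\phi^1$ and $\phi^2$ gives $\pm\phi^{13}\wedge\bar\phi^3=\pm\del(\phi^4\wedge\bar\phi^3)$, also exact. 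So the form $\eta$ you are looking for does not exist there, and ``adjusting the factors'' is a search, not an argument.

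The missing idea is a triple whose cup products vanish for structural reasons and whose indeterminacy is controlled by one non-zero structure constant. The paper takes $\phi^1,\dots,\phi^r$ spanning the closed $(1,0)$-forms and sets $\alpha_{12}=\del\phi^{r+1\cdots n}$, $\alpha_{23}=\overline{\alpha_{12}}$, $\alpha_{34}=\bar\phi^{1\cdots r}$. Then $\alpha_{12}\wedge\alpha_{23}=\pm\del\dbar(\phi^{r+1\cdots n}\wedge\bar\phi^{r+1\cdots n})$ automatically, $\alpha_{23}\wedge\alpha_{34}=0$ by bidegree, and the product is $\pm[\phi^{r+1\cdots n}\wedge\bar\phi^{1\cdots n}]$. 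Pairing with the closed form $\phi^{1\cdots r}$ kills $\del R+\dbar S$ by unimodularity. It also kills the indeterminacy $H_A^{n-r,n-r}\cup[\bar\phi^{1\cdots r}]$: with $\beta=\phi^{1\cdots\hat l\cdots\hat m\cdots r\,r+1}$ and $A_{lm}\neq0$ a coefficient of $\phi^{lm}$ in $d\phi^{r+1}$, one has $\del\dbar(\beta\wedge\bar\beta)=\pm|A_{lm}|^2\,\phi^{1\cdots r}\wedge\bar\phi^{1\cdots r}$. The transfer to $X$ via \cite[Lemma 2.1]{ST1} then proceeds as in your plan.
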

Then, we prove the existence of ABC-Massey products for complex unimodular solvable Lie algebras (Theorem \ref{thm:ABC-unimod}), which allows to prove the the main theorem
\begin{thm*}[Theorem \ref{thm:ABC-solv-unimod}]
Let $X$ be a compact complex parallelisable solvmanifold. Then, $X$ admits a non-vanishing triple ABC-Massey product.
\end{thm*}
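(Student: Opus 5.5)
The statement to be proved is the last one displayed (Theorem \ref{thm:ABC-solv-unimod}): every compact complex parallelisable solvmanifold $X=\Gamma\backslash G$ carries a non-vanishing triple ABC--Massey product. The plan is to reduce everything to the Lie algebra $\mathfrak g$ of $G$ and then invoke the two algebraic results announced above.

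\textbf{Step 1: $\mathfrak g$ is complex unimodular.} Since $G$ admits a lattice, $G$ is unimodular. For a complex Lie group this means that the complex trace $\operatorname{tr}_{\C}\operatorname{ad}_X$ has vanishing real part for all $X$. But $X\mapsto \operatorname{tr}_{\C}\operatorname{ad}_X$ is $\C$-linear, so vanishing real part on all of $\mathfrak g$ forces it to vanish identically. Hence $\mathfrak g$ is a complex unimodular solvable Lie algebra.

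\textbf{Step 2: case split.} If $\mathfrak g$ is Abelian, then $X$ is a complex torus, hence K\"ahler, and all ABC--Massey products vanish. So the statement must tacitly exclude tori, which are nilmanifolds with $G$ Abelian; I will assume $G$ is non-Abelian. If $\mathfrak g$ is nilpotent, Theorem \ref{thm:ABC-unimod-abel-nil} already gives the result. In general, Theorem \ref{thm:ABC-unimod} gives a non-vanishing triple ABC--Massey product on the invariant double complex $(\Lambda^{\bullet,\bullet}\mathfrak g_{\C}^*,\del,\dbar)$.

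\textbf{Step 3: transfer to $X$.} This is the main obstacle. By \cite[Lemma 2.1]{ST1}, a non-vanishing invariant ABC--Massey product transfers to $X$, provided the inclusion of invariant forms induces isomorphisms on Bott--Chern and Aeppli cohomology, or at least injections compatible with the indeterminacy. For complex parallelisable \emph{nilmanifolds}, Sakane's theorem gives isomorphisms on Dolbeault cohomology and hence on Bott--Chern and Aeppli cohomology. For solvable non-nilpotent $G$ this can fail, so I would instead use the symmetrization map $\mu$ obtained by averaging over $\Gamma\backslash G$ with respect to the bi-invariant measure (which exists by unimodularity). The map $\mu$ commutes with $\del$ and $\dbar$ and is a left inverse to the inclusion. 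Consequently the inclusion is injective on $H_{BC}$ and $H_A$, and invariant indeterminacy pulls back correctly. The key check is that the indeterminacy on $X$ contains no additional classes killing the invariant representative. Suppose $\eta=\mathfrak a_{12}\cup\beta+\gamma\cup\mathfrak a_{34}+\del u+\dbar v$ on $X$. Applying $\mu$, which preserves invariant forms, gives the same relation with invariant $\mu\beta,\mu\gamma,\mu u,\mu v$. This contradicts non-vanishing at the invariant level, so the product on $X$ is non-trivial.
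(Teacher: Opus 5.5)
Your proposal is correct and follows the paper's route: the paper deduces the theorem directly from Theorem \ref{thm:ABC-unimod}, using the injective inclusion of invariant triple ABC--Massey products from \cite[Lemma 2.1]{ST1} and assuming $G$ non-Abelian throughout Section \ref{sec:complexparallelisable}. Your Step 1 (complex unimodularity of $\mathfrak g$ from the existence of a lattice) and your symmetrization argument spell out what the paper leaves to that citation; the one fact used silently in your indeterminacy check is $\mu(\alpha\wedge\beta)=\alpha\wedge\mu(\beta)$ for invariant $\alpha$, which holds because $\mu$ is computed by evaluating on left-invariant vector fields.
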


We note that the above theorems generalize the computations of \cite{ST25}, where triple ABC--Massey products were constructed case-by-case on each element of the family, up to complex dimension 5.  Here instead, we give a general formula  to induce non vanishing triple ABC--Massey product on any complex parallelisable solvmanifold via left-invariant forms.

\indent The paper is organized as follows. In section \ref{sec:preliminaries}, we set the notations and recall the main facts of Hodge theory and 
cohomologies on solvable Lie algebras endowed with a complex structure. We also recall the definition of Aeppli-Bott-Chern-Massey products. 
Section \ref{sec:complexspecialtype} is dedicated to the study of cohomological conditions for the existence of geometrically Bott-Chern formal metrics on nilmanifolds endowed with a complex structure of special type. Section \ref{sec:complexparallelisable} is devoted to the construction of non-trivial ABC-Massey products on complex parallelisable solvmanifolds. Finally, section \ref{sec:examples} contains the explicit construction of a class of complex parallelizable solvmanifolds and their non-vanishing triple ABC-Massey products.\vspace{1.0cm}

\noindent {\em Acknowledgements.} The authors would like to thank the Rényi Institute in Budapest for its warm hospitality on the occasion of the conference ``Focused workshop on cohomological and metric properties of Hermitian and symplectic manifolds" (02/23/2026-02/27/2026), during which most of the present paper was produced.

\section{Preliminaries}\label{sec:preliminaries}
\indent Let $G$ be a real $2n$-dimensional, connected and simply-connected Lie group with Lie algebra $\mathfrak{g}$. A complex structure $J$ on $G$ is said to be \emph{left-invariant} if it is induced via left translation by an endomorphism $J\in\text{End}(\mathfrak{g})$ satisfying $J^{2}=-id$ and the integrability condition
\[
N_J(u,v)=[u,v]+J[Ju,v]+J[u,Jv]-[Ju,Jv]=0, \qquad u,v\in\mathfrak{g}.
\]
The complexification of the dual Lie algebra $\mathfrak{g}_{\mathbb{C}}^{\ast}=\mathfrak{g}^{\ast}\otimes\mathbb{C}$ decomposes as
$$\mathfrak{g}_{\mathbb{C}}^{\ast}=(\mathfrak{g}_{\mathbb{C}}^{\ast})^{1,0}\oplus(\mathfrak{g}_{\mathbb{C}}^{\ast})^{0,1},$$
where 
$(\mathfrak{g}_{\mathbb{C}}^{\ast})^{1,0}=\{\alpha-iJ\alpha: \alpha\in\mathfrak{g}^{\ast}\}$ and $(\mathfrak{g}_{\mathbb{C}}^{\ast})^{0,1}=\{\alpha+iJ\alpha: \alpha\in\mathfrak{g}^{\ast}\}$ are the $\pm i$-eigenspaces of the $\mathbb{C}$-linear extension of $J$ on $\mathfrak{g}^{\ast}_{\mathbb{C}}$.\\ Correspondingly, $J$ induces a natural bigrading on the exterior algebra $\bigwedge^{\bullet}\mathfrak{g}_{\mathbb{C}}^{\ast}$, namely $$\textstyle \bigwedge^{\bullet}\mathfrak{g}^{\ast}_{\mathbb{C}}=\displaystyle\bigoplus_{p+q=\bullet}\textstyle\bigwedge^{p,q}\mathfrak{g}^{\ast},$$ where $\bigwedge^{p,q}\mathfrak{g}^{\ast}=\bigwedge^{p}(\mathfrak{g}_{\mathbb{C}}^{\ast})^{1,0}\otimes \bigwedge^{q}(\mathfrak{g}_{\mathbb{C}}^{\ast})^{0,1} $ denotes the space of $(p,q)$-forms on $\mathfrak{g}$. \vspace{0.1 cm}\\By the integrability of $J$, the Chevalley-Eilenberg differential $d$ on $\mathfrak{g}^{\ast}_{\mathbb{C}}$, defined by
\begin{equation*}
    d\alpha(u,v)=-\alpha([u,v]),\qquad \alpha\in\mathfrak{g}^{\ast}_{\mathbb{C}},\, u,v\in\mathfrak{g}_{\mathbb{C}},
\end{equation*}
decomposes as $d=\del+\dbar.$ \vspace{0.1 cm}\\ Therefore, the Bott-Chern and Aeppli cohomologies of $\mathfrak{g}$ are defined, respectively, by 
\begin{equation*}
    H_{BC}^{\bullet,\bullet}(\mathfrak{g}):=\frac{\ker \del \cap \ker\dbar\cap\textstyle\bigwedge^{\bullet,\bullet}\mathfrak{g}^{\ast}}{\im\, \del\dbar\cap\textstyle\bigwedge^{\bullet,\bullet}\mathfrak{g}^{\ast}},\qquad H_{A}^{\bullet,\bullet}(\mathfrak{g}):=\dfrac{\ker \del\dbar\cap\textstyle\bigwedge^{\bullet,\bullet}\mathfrak{g}^{\ast}}{(\im\, \del+\im\, \dbar)\cap\textstyle\bigwedge^{\bullet,\bullet}\mathfrak{g}^{\ast}}.
\end{equation*}
\indent Let $ \langle\,\,,\,\rangle $ be a Hermitian inner product on a unimodular Lie algebra $\mathfrak{g}$ endowed with a complex structure $J$. We denote by $\omega$ the associated fundamental form and by $\Omega$ the standard volume form, respectively defined by
\begin{equation*}
\omega(u,v):=\langle Ju,v\rangle ,\quad \forall \,u,\, v\in \mathfrak{g},\qquad \Omega=\frac{\omega^{n}}{n!}.
\end{equation*}
The $\mathbb{C}$-antilinear Hodge operator $\ast: \bigwedge^{p,q}\mathfrak{g}^{\ast}\to\bigwedge^{n-p,n-q}\mathfrak{g}^{\ast}$ with respect to $\langle\,\,,\,\rangle$ is defined by
\begin{equation*}
    \alpha\wedge \ast\beta=\langle \alpha,\beta\rangle\,\Omega,
\end{equation*}
where $\langle \alpha,\beta\rangle$ denotes the Hermitian inner product induced on $\bigwedge^{p,q}\mathfrak{g}^{\ast}$. We denote by $\del^{\ast}$ and $\dbar^{\ast}$ the adjoint operators of $\del$ and $\dbar$, respectively, with respect to the Hermitian inner product $\langle\,\,,\,\rangle$.\\
Since $\mathfrak{g}$ is unimodular, for any given $\beta\in\bigwedge^{p,q+1}\mathfrak{g}^{\ast}$, and any $\alpha\in \bigwedge^{p,q}\mathfrak{g}^{\ast}$, we have
\begin{align*}
    0=\langle d(\alpha\wedge\ast\beta), \Omega\rangle &= \langle \dbar(\alpha\wedge\ast\beta),\Omega\rangle\\
  &=\langle \dbar\alpha\wedge\ast\beta+(-1)^{p+q}\alpha\wedge\dbar\ast\beta,\Omega\rangle\\
 &= \langle \langle \dbar\alpha,\beta\rangle \Omega,\Omega\rangle+\langle\langle \alpha,\ast\,\dbar\ast\beta\rangle\Omega,\Omega\rangle,
\end{align*}
hence
\begin{equation*}
 \langle \dbar\alpha,\beta\rangle=\langle \alpha,-\ast\dbar\ast\beta\rangle.
\end{equation*}
Consequently, the adjoint operator of $\dbar$ is given by
 $$ \dbar^{\ast}=-\ast\dbar\ast.$$  
 Similarly, $\del^{\ast}=-\ast\del\,\ast$ holds.\\
According to \cite{Schw}, the \emph{Bott-Chern Laplacian} and the \emph{Aeppli Laplacian} on $\bigwedge^{\bullet,\bullet}\mathfrak{g}^{\ast}$ are the self-adjoint operators defined by
\begin{eqnarray*}
\Delta_{BC}&:=& \del\dbar\dbar^*\del^* +\dbar^*\del^*\del\dbar + \dbar^*\del \del^*\dbar + \del^*\dbar\dbar^*\del +\dbar^*\dbar +\del^*\del,\\
\Delta_{A}&:=& \del\del^*+\dbar\dbar^*+ \dbar^*\del^*\del\dbar +\del\dbar\dbar^*\del^*+\del\dbar^*\dbar\del^*+\dbar\del^*\del\dbar^*,
\end{eqnarray*}
whose kernels define, respectively, the spaces of \emph{Bott--Chern harmonic forms} and \emph{Aeppli harmonic forms}:
\begin{equation*}
 \mathcal{H}_{\Delta_{BC}}^{\bullet,\bullet}(\mathfrak{g})\   :=\ker\Delta_{BC}\vert_{\bigwedge^{\bullet,\bullet }\mathfrak{g}^{\ast}}\qquad \mathcal{H}_{\Delta_{A}}^{\bullet,\bullet}(\mathfrak{g}):=\ker\Delta_{A}\vert_{\bigwedge^{\bullet,\bullet}\mathfrak{g}^{\ast}}.\\\vspace{0.1 cm}
\end{equation*}
Since $\Delta_{BC}$ and $\Delta_{A}$ are self-adjoint operators, we have the following lemma. 
\begin{lemma}\label{HodgeDec}
Let $\mathfrak{g}$ be a unimodular Lie algebra endowed with a complex structure $J$ and a Hermitian inner product $\langle\,\,,\,\rangle$. Then, the following Hodge decompositions hold:
\begin{align*}
\textstyle\bigwedge^{p,q}\mathfrak{g}^*&= \mathcal{H}_{\Delta_{BC}}^{p,q}(\mathfrak{g})\   \oplus\imm\,\Delta_{BC}\vert_{\bigwedge^{p,q }\mathfrak{g}^{\ast}}\\
  &=\mathcal{H}_{\Delta_{BC}}^{p,q}(\mathfrak{g})\oplus^{\perp} \imm\,(\del\dbar_{|\bigwedge^{p-1,q-1}\mathfrak{g}^*})\oplus^{\perp} (\imm\,\del^{\ast}_{|\bigwedge^{p+1,q}\mathfrak{g}^*}+\imm\,\dbar^{\ast}_{|\bigwedge^{p,q+1}\mathfrak{g}^*}),
\end{align*}
and
\begin{align*}
\textstyle\bigwedge^{p,q}\mathfrak{g}^*&=\mathcal{H}_{\Delta_{A}}^{p,q}(\mathfrak{g})\oplus\imm\,\Delta_{A}\vert_{\bigwedge^{p,q }\mathfrak{g}^{\ast}}\\
  &=\mathcal{H}_{\Delta_{A}}^{p,q}(\mathfrak{g})\oplus^{\perp} (\imm\,\del_{|\bigwedge^{p-1,q}\mathfrak{g}^*}+\imm\,\dbar_{|\bigwedge^{p,q-1}\mathfrak{g}^*})\oplus^{\perp}\imm((\del\dbar)^{\ast}_{|\bigwedge^{p+1,q+1}\mathfrak{g}^*}).
\end{align*}
\end{lemma}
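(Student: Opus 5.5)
The plan is to treat $\bigwedge^{p,q}\mathfrak{g}^*$ as a finite-dimensional Hermitian vector space and use plain linear algebra. First, $\Delta_{BC}$ and $\Delta_A$ are self-adjoint, because $\dbar^*=-\ast\dbar\ast$ and $\del^*=-\ast\del\ast$ are the genuine adjoints in the unimodular case, as computed above. For a self-adjoint endomorphism $L$ of a finite-dimensional inner product space we have $\ker L=(\imm L)^\perp$, since $\langle Lx,y\rangle=\langle x,Ly\rangle$. This gives the first line of each decomposition: $\bigwedge^{p,q}\mathfrak{g}^*=\ker\Delta\oplus^\perp\imm\Delta$.

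For the refined Bott--Chern decomposition, pairing $\Delta_{BC}\alpha$ with $\alpha$ gives
\[
\langle\Delta_{BC}\alpha,\alpha\rangle=\|\dbar^*\del^*\alpha\|^2+\|\del\dbar\alpha\|^2+\|\del^*\dbar\alpha\|^2+\|\dbar^*\del\alpha\|^2+\|\dbar\alpha\|^2+\|\del\alpha\|^2 .
\]
Hence $\ker\Delta_{BC}=\ker\del\cap\ker\dbar\cap\ker(\del\dbar)^*$. Write $A=\imm\del\dbar$ and $B=\imm\del^*+\imm\dbar^*$. Then:
\begin{itemize}
\item $A\perp B$, since $\langle\del\dbar x,\del^*y\rangle=\langle\del\del\dbar x,y\rangle=0$, and similarly $\langle\del\dbar x,\dbar^*y\rangle=-\langle\del\dbar\dbar x,y\rangle=0$, using $\del\dbar=-\dbar\del$.
\item $(A\oplus B)^\perp=\ker(\del\dbar)^*\cap\ker\del\cap\ker\dbar=\ker\Delta_{BC}$.
\end{itemize}
It follows that $\bigwedge^{p,q}=\mathcal H_{BC}\oplus^\perp A\oplus^\perp B$. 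Comparing with the first decomposition, we also get $\imm\Delta_{BC}=A\oplus B$.

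The Aeppli case is the same argument. Expanding $\langle\Delta_A\alpha,\alpha\rangle$ as a sum of squared norms gives $\ker\Delta_A=\ker\del^*\cap\ker\dbar^*\cap\ker\del\dbar$. Let $C=\imm\del+\imm\dbar$ and $D=\imm(\del\dbar)^*$. These are orthogonal, since $(\del\dbar)\del=0$ and $(\del\dbar)\dbar=0$. Moreover $(C\oplus D)^\perp=\ker\del^*\cap\ker\dbar^*\cap\ker\del\dbar$.

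The only real point of care is the sum-of-squares identity. One must check that each mixed term, such as $\dbar^*\del\del^*\dbar$, equals $(\del^*\dbar)^*(\del^*\dbar)$. This holds because $(\del^*\dbar)^*=\dbar^*\del$, so the term is nonnegative and the identity holds. Everything else is routine finite-dimensional linear algebra, with no analysis needed, since $\bigwedge^{\bullet,\bullet}\mathfrak{g}^*$ is finite dimensional.
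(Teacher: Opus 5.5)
Your proof is correct and follows essentially the paper's route. The paper justifies the lemma only by the self-adjointness of $\Delta_{BC}$ and $\Delta_A$ on the finite-dimensional space $\bigwedge^{p,q}\mathfrak{g}^*$ (following Schweitzer), and your sum-of-squares identity together with the computation of $(A\oplus B)^\perp$ and $(C\oplus D)^\perp$ supplies exactly the details behind the refined orthogonal splittings.

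One small remark: $\del^*$ and $\dbar^*$ are defined as the abstract adjoints, so self-adjointness and both decompositions hold without unimodularity. Unimodularity is only needed to identify them with $-\ast\del\ast$ and $-\ast\dbar\ast$.
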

\noindent Accordingly, the following isomorphisms of complex vector spaces hold:
$$
\mathcal{H}_{\Delta_{BC}}^{\bullet,\bullet}(\mathfrak{g})\cong H^{\bullet,\bullet}_{BC}(\mathfrak{g}),\qquad \mathcal{H}_{\Delta_{A}}^{\bullet,\bullet}(\mathfrak{g})\cong H^{\bullet,\bullet}_{A}(\mathfrak{g}).
$$
In particular, $\alpha\in\bigwedge^{p,q}\mathfrak{g}^*$ is {\em Bott-Chern harmonic}  (respectively, {\em Aeppli harmonic}) if and only if
\begin{equation*}
\alpha\in \mathcal{H}_{\Delta_{BC}}^{p,q}(\mathfrak{g})\iff\begin{cases}
    \del\alpha=0\\
    \dbar\alpha=0\\
     \del\dbar\ast\alpha=0
\end{cases},\qquad 
\alpha\in \mathcal{H}_{\Delta_{A}}^{p,q}(\mathfrak{g})\iff\begin{cases}
   \del\dbar\alpha=0\\
   \del\ast\alpha=0\\
   \dbar\ast\alpha=0. 
\end{cases}
\end{equation*}
\indent Finally, we recall the definition of \emph{triple ABC-Massey products} on a Lie algebra $\mathfrak{g}$.\\
Let $\mathfrak {a}_{12}=[\alpha_{12}]\in H_{BC}^{p,q}(\mathfrak{g})$, $\mathfrak {a}_{23}=[\alpha_{23}]\in H_{BC}^{r,s}(\mathfrak{g})$, and $\mathfrak {a}_{34}=[\alpha_{34}]\in H_{BC}^{u,v}(\mathfrak{g})$ such that 
$$
\mathfrak {a}_{12}\cup\mathfrak {a}_{23}=0,\qquad \mathfrak {a}_{23}\cup\mathfrak {a}_{34}=0,
$$
that is, 
$$
(-1)^{p+q}\alpha_{12}\wedge\alpha_{23}=\del\dbar f_{13}, \qquad (-1)^{r+s}\alpha_{23}\wedge\alpha_{34}=\del\dbar f_{24}
$$
for some forms $f_{13}\in\bigwedge^{p+r-1,q+s-1}\mathfrak{g}^*,\,f_{24}\in\bigwedge^{r+u-1,s+v-1}\mathfrak{g}^*$. \\Then, the \emph{triple $ABC$-Massey product} $\langle\mathfrak {a}_{12},\mathfrak {a}_{23},\mathfrak {a}_{34}\rangle_{ABC}$ is defined as
\begin{eqnarray*}
\langle\mathfrak {a}_{12},\mathfrak {a}_{23},\mathfrak {a}_{34}\rangle_{ABC}&=&[(-1)^{p+q}\alpha_{12}\wedge f_{24}-(-1)^{r+s}f_{13}\wedge\alpha_{34}]\\[5pt]
&\in& \frac{H_A^{p+r+u-1,q+s+v-1}(\mathfrak{g})}{\mathfrak {a}_{12}\cup H_A^{r+u-1,s+v-1}(\mathfrak{g})+ H_A^{p+r-1,q+s-1}(\mathfrak{g})\cup\mathfrak {a}_{34}}.
\end{eqnarray*}
\indent Now, let $\Gamma$ be a lattice in $G$. Then $M=\Gamma\backslash G$ is a compact smooth manifold, and $J$ descends to a complex structure on $M$. Thus, we can consider the complex compact manifold $$X=(\Gamma\backslash G, J).$$
If, in addition, $G$ is solvable or nilpotent, the corresponding compact complex manifold $X=(\Gamma\backslash G, J)$ is called a \emph{solvmanifold} or \emph{nilmanifold}, respectively.
\section{Special type complex structures and geometric Bott-Chern formality}\label{sec:complexspecialtype}
Let $G$ be a real $2n$-dimensional, connected and simply-connected Lie group with Lie algebra $\mathfrak{g}$. Let $\Gamma$ be a lattice of $G$. An invariant complex structure $J$ on $X=(\Gamma\backslash G, J)$ is called a \emph{special type complex structure} \cite{ST1}, if there exists a coframe $\{\phi^{1},\dots, \phi^{n}\}\subset (g^{\ast}_{\mathbb{C}})^{1,0}$ satisfying
\begin{equation*}\begin{cases}
    d\phi^{j}=0,\qquad j\in \{1,\dots, n-1\}\\
    d\phi^{n}\in \text{Span}_{\mathbb{C}}\langle \phi^{ij}, \phi^{i\overline{j}} \rangle_{i,j=1,\dots, n-1}.
\end{cases}
\end{equation*}
The following lemma provides a cohomological condition in order that a complex structure on a nilpotent Lie algebra be of special type.
\begin{lemma}\label{lemma:complexspecialtype}
Let $\mathfrak{g}$ be a $2n$-dimensional nilpotent Lie algebra endowed with a complex structure $J$ such that $h_{BC}^{1,0}(\mathfrak{g})=n-1$. Then  $J$ is a special type complex structure on $\mathfrak{g}$.
\begin{proof}
Let $\{\phi^1,\dots, \phi^n\}$ be a basis of $(1,0)$-forms on $(\mathfrak{g},J)$, and let ${Z_1,\dots,Z_n}$ be its dual basis. By \cite{Sal01} and the assumption $h_{BC}^{1,0}(\mathfrak{g})=n-1$, we can assume that such a basis satisfies
\[
\begin{cases}
    d\phi^{i}=0, \quad i\in\{1,\dots,  n-1\}, \vspace{0.2cm}\\
    \displaystyle d\phi^n=\sum_{1\leq i<j\leq n} A_{ij}\phi^{ij}+\sum_{1\leq i\leq n-1}\sum_{1\leq j\leq n}B_{i\overline{j}}\phi^{i\overline{j}}.
\end{cases}
\]
Note that the commutator relations are then
\[
\begin{cases}[Z_i,Z_j]=-A_{ij}Z_n, \quad 1\leq i,j\leq n,\\
[\overline{Z_i},\overline{Z_j}]=-\overline{A_{ij}}\overline{Z_n}, \quad 1\leq i,j\leq n, \\
[Z_i,\overline{Z_{j}}]=-B_{i\overline{j}}Z_n+\overline{B_{j\overline{i}}}\,\overline{Z_n}, \quad 1\leq i\leq n-1, \quad  1\leq j\leq n-1 \\
[Z_i,\overline{Z_n}]=-B_{i\overline{n}}\,Z_n, \quad 1\leq i\leq n-1\\
[Z_n,\overline{Z_i}]=\overline{B_{i\overline{n}}}\,\overline{Z_n}, \quad 1\leq i\leq n-1.
\end{cases}
\]
In particular, we have $[Z_n,\overline{Z_n}]=0$, since $B_{n\overline{n}}=0$.\\
We claim that $A_{in}=0$ for every $i\in\{1,\dots, n-1\}$ and $B_{i\overline{n}}=0$ for every $i\in\{1,\dots, n-1\}$. \\
\indent By contradiction, assume that there exists $A_{in}\neq 0$, for $i\in\{1,\dots, n-1\}$, which implies that $[Z_i,Z_n]=-A_{in}Z_n$, i.e., $\mathfrak{g}_{\C}^{(1)}\supset\langle Z_n\rangle$. A simple induction argument shows then that $\mathfrak{g}_\C^{(k)}\supset \langle Z_n\rangle$ for every $k$, i.e., $\mathfrak{g}$ is not nilpotent, which is a contradiction. As a consequence, $A_{in}=0$ for every $i\in\{1,\dots, n-1\}$.\\
\indent Let us now evaluate $d^2\phi^n$ on the vectors $Z_i,Z_n,\overline{Z_i}$, $i\in\{1,\dots, n-1\}$. From the Jacobi identity,
\begin{align*}
0&=d^2\phi^n(Z_i,Z_n,\overline{Z_i})=d(d\phi^n)(Z_i,Z_n,\overline{Z_i})\\
&=-d\phi^n([Z_i,Z_n],\overline{Z_i})+d\phi^n([Z_i,\overline{Z_i}],Z_n)-d\phi^n([Z_n,\overline{Z_i}],Z_i)\\
&=-d\phi^n(B_{i\overline{i}}Z_n-\overline{B_{i\overline{i}}}\,\overline{Z_n},Z_n)-\overline{B_{i\overline{n}}}d\phi^n(\overline{Z_n},Z_i)\\
&=\overline{B_{i\overline{i}}}d\phi^n(\overline{Z_n},Z_n)-\overline{B_{i\overline{n}}}\phi^n([Z_i,\overline{Z_n}])\\
&=-\overline{B_{i\overline{n}}}\phi^n([Z_i,\overline{Z_n}])=|B_{i\overline{n}}|^2.
\end{align*}
This implies that $B_{i\overline{n}}=0$ for every $i\in\{1,\dots, n-1\}$.

To summarize, the structure equations become
\[
\begin{cases}
    d\phi^{i}=0, \quad i\in\{1,\dots,  n-1\}, \vspace{0.2cm}\\
    \displaystyle d\phi^n=\sum_{1\leq i<j< n} A_{ij}\phi^{ij}+\sum_{1\leq i, j\leq n-1}B_{i\overline{j}}\phi^{i\overline{j}},
\end{cases}
\]
hence the complex structure is of special type.
\end{proof}
\end{lemma}
Combining the previous lemma and \cite[Remark 3.3]{ST1}, we obtain the following theorems, which relate geometrically Bott-Chern formal metrics to SKT ones. We first recall their definitions on the Lie algebra $\mathfrak{g}$.\\
A Hermitian metric $g$ is said to be \emph{geometrically Bott-Chern formal} if $(\mathcal{H}_{\Delta_{BC}}^{\bullet,\bullet}(\mathfrak{g}),\wedge)$ is an algebra, while it is said to be \emph{strong K\"{a}hler with torsion}, or shortly \emph{SKT}, if the fundamental form $\omega$ satisfies $\del\dbar\omega=0.$  
\begin{thm}\label{thm:5.1}
Let $\mathfrak{g}$ be a $2n$-dimensional nilpotent Lie algebra endowed with a complex structure $J$ such that $h_{BC}^{1,0}(\mathfrak{g})=n-1$. Then, any Hermitian metric on $(\mathfrak{g},J)$ is SKT if and only if it is geometrically Bott-Chern formal.
\end{thm}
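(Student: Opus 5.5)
The plan is to reduce the statement to the special type setting and then argue directly on the Lie algebra. First, by Lemma \ref{lemma:complexspecialtype}, the hypothesis $h^{1,0}_{BC}(\mathfrak{g})=n-1$ gives a coframe $\{\phi^1,\dots,\phi^n\}$ with $d\phi^j=0$ for $j<n$ and
\[
d\phi^n=\sum_{1\le i<j\le n-1}A_{ij}\phi^{ij}+\sum_{i,j\le n-1}B_{i\bar j}\phi^{i\bar j}.
\]
Then I would invoke \cite[Remark 3.3]{ST1}, which, as I understand it, characterizes geometric Bott-Chern formality of invariant metrics on special type structures: a metric is geometrically Bott-Chern formal exactly when the structure constants of $d\phi^n$, measured against that metric, satisfy a precise algebraic condition. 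The proof then amounts to identifying that condition with $\del\dbar\omega=0$.

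To do this I would compute $\del\dbar\omega$ for a general invariant Hermitian metric
\[
\omega=\tfrac{i}{2}\sum_{k,l}h_{k\bar l}\,\phi^k\wedge\overline{\phi^l}.
\]
Every $\phi^j$ with $j<n$ is closed, so only the terms containing $\phi^n$ or $\overline{\phi^n}$ can contribute. The first step is to simplify $\omega$: replace $\phi^n$ by $\phi^n+\sum_{j<n}c_j\phi^j$. This preserves the special type form, since it only changes $d\phi^n$ by zero. With a suitable choice of the $c_j$ we can make $\omega$ orthogonally adapted, $h_{j\bar n}=0$ for $j<n$.

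Once $\omega$ is adapted, the computation reduces to the term $h_{n\bar n}\,\phi^n\wedge\overline{\phi^n}$:
\[
\del\dbar(\phi^n\wedge\overline{\phi^n})=-\,\del\phi^n\wedge\overline{\del\phi^n}\;\pm\;\dbar\phi^n\wedge\overline{\dbar\phi^n}.
\]
Here the expressions $\del\phi^n=\sum A_{ij}\phi^{ij}$ and $\dbar\phi^n=\sum B_{i\bar j}\phi^{i\bar j}$ live entirely in the closed subalgebra generated by $\phi^1,\dots,\phi^{n-1}$. So the SKT condition becomes a quadratic identity in $A$ and $B$. Relative to the metric on $\langle\phi^1,\dots,\phi^{n-1}\rangle$, it reads
\[
A\wedge\bar A = B\wedge\bar B \quad\text{(suitably signed)}.
\]

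The last step is to compare this with the condition from \cite[Remark 3.3]{ST1}. That remark should say that Bott-Chern harmonic forms multiply to harmonic forms precisely when the $\del\dbar$-exact part, namely the projection of wedge products of harmonic forms onto $\imm\,\del\dbar$, vanishes. In the special type setting $\imm\,\del\dbar$ is generated by $\del\dbar(\phi^n\wedge\overline{\phi^n})$ and its products with closed forms. So the obstruction to formality should be exactly the same quadratic expression, and the two conditions would coincide.

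The main obstacle is this matching step. It requires checking that the harmonicity condition $\del\dbar*\alpha=0$ for products interacts with the metric exactly through $\del\dbar\omega$, and that the adapted-basis change does not alter the metric dependence. If the cited remark is stated only for a fixed orthonormal adapted coframe, I would first show that any invariant Hermitian metric admits such a coframe preserving the special type equations. I expect this to be doable with a Gram–Schmidt argument on $\phi^1,\dots,\phi^{n-1}$ followed by the shift of $\phi^n$ above, and then apply the remark verbatim.
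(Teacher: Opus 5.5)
Your skeleton matches the paper's. The paper's proof is only Lemma \ref{lemma:complexspecialtype} combined with \cite[Remark 3.3]{ST1}, with the remark used as the ready-made statement that, for complex structures of special type, an invariant Hermitian metric is SKT if and only if it is geometrically Bott--Chern formal. You do not use the remark in that form, and what you put in its place proves nothing. The criterion you attribute to it (products of harmonic forms are harmonic iff their $\partial\bar\partial$-exact components vanish) is just the definition. Indeed, Lemma \ref{HodgeDec} gives $\ker\partial\cap\ker\bar\partial=\mathcal{H}_{\Delta_{BC}}\oplus^{\perp}\operatorname{im}\partial\bar\partial$, and such products are always $\partial$- and $\bar\partial$-closed. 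So the whole content sits in your ``matching step'', which is only asserted. In particular, nothing in your text shows that a non-SKT metric is not geometrically formal; that requires exhibiting harmonic forms whose product is not harmonic. Your diagnosis of the obstacle is also misdirected. Neither property depends on the metric, so the adapted-coframe normalization and the concern about $\partial\bar\partial*\alpha$ are irrelevant.

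The gap closes with two facts you already noticed. Let $\Lambda$ be the subalgebra generated by $\phi^1,\dots,\phi^{n-1}$ and their conjugates. Its elements are $d$-closed, and $\partial\phi^n,\bar\partial\phi^n\in\Lambda$ give $\partial\bar\partial\phi^n=0=\partial\bar\partial\overline{\phi^n}$. Write any form as $a+b\wedge\phi^n+c\wedge\overline{\phi^n}+e\wedge\phi^n\wedge\overline{\phi^n}$ with $a,b,c,e\in\Lambda$. This yields $\operatorname{im}\partial\bar\partial=\Lambda\wedge\Theta$ and $\partial\bar\partial\omega=\frac{i}{2}h_{n\bar n}\Theta$ for every invariant metric, with $h_{n\bar n}>0$ and $\Theta:=\partial\bar\partial(\phi^n\wedge\overline{\phi^n})=\bar\partial\phi^n\wedge\overline{\bar\partial\phi^n}-\partial\phi^n\wedge\overline{\partial\phi^n}$. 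Hence the metric is SKT iff $\Theta=0$.

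If $\Theta=0$, then $\operatorname{im}\partial\bar\partial=0$. So the Bott--Chern harmonic forms are exactly $\ker\partial\cap\ker\bar\partial$, which is closed under $\wedge$.

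If $\Theta\neq0$, consider $\partial\phi^n$, $\bar\partial\phi^n$ and their conjugates. They are $\partial$- and $\bar\partial$-closed, of bidegrees $(2,0)$, $(1,1)$, $(0,2)$, where $\operatorname{im}\partial\bar\partial=0$ on a Lie algebra. So they are harmonic for every metric. Geometric formality would then make $\bar\partial\phi^n\wedge\overline{\bar\partial\phi^n}$ and $\partial\phi^n\wedge\overline{\partial\phi^n}$ harmonic, hence also their difference $\Theta\in\operatorname{im}\partial\bar\partial$. Orthogonality then forces $\Theta=0$, a contradiction.

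This proves the statement directly, without relying on the exact wording of \cite[Remark 3.3]{ST1}. It also shows that either every invariant metric is both SKT and geometrically formal, or none is either.
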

Since any special type complex structure $J$ is nilpotent, by Theorem 3.8 \cite{Ang}, the following isomorphism $$H_{BC}^{\bullet,\bullet}(\mathfrak{g})\simeq H_{BC}^{\bullet,\bullet}(X)$$
holds. In particular, for any invariant Hermitian metric on $X$, it holds that $$\mathcal{H}^{\bullet,\bullet}_{\Delta_{BC}}(\mathfrak{g})=\mathcal{H}^{\bullet,\bullet}_{\Delta_{BC}}(X).$$
As a consequence, by Theorem \ref{thm:5.1}, it follows that
\begin{thm}\label{thm:complex-special-type}
Let $X=(\Gamma\backslash G,J)$ be a $2n$-dimensional nilmanifold endowed with an invariant complex structure $J$ and let $h^{1,0}_{BC}(X)=n-1$. Then, any invariant Hermitian metric on $(\Gamma\backslash G,J)$ is SKT if and only if it is geometrically Bott-Chern formal.
\end{thm}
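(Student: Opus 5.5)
The plan is to reduce Theorem \ref{thm:complex-special-type} to the Lie algebra statement, Theorem \ref{thm:5.1}, by passing through invariant forms. Let $\mathfrak{g}$ be the Lie algebra of $G$. An invariant Hermitian metric on $X$ is the same thing as a Hermitian inner product on $(\mathfrak{g},J)$. Its fundamental form $\omega$ is an invariant form, so the condition $\del\dbar\omega=0$ on $X$ is equivalent to the same condition on $\bigwedge^{1,1}\mathfrak{g}^*$. Hence the SKT property is read off equally on $X$ and on $\mathfrak{g}$. The remaining task is to show that geometric Bott--Chern formality on $X$ coincides with geometric Bott--Chern formality on $\mathfrak{g}$, and that the hypotheses of Theorem \ref{thm:5.1} are met.

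\textbf{Step 1: the hypothesis $h^{1,0}_{BC}(\mathfrak{g})=n-1$.} I first check that $h^{1,0}_{BC}(X)=n-1$ forces $h^{1,0}_{BC}(\mathfrak{g})=n-1$. In bidegree $(1,0)$ nothing is exact, so $H^{1,0}_{BC}(X)$ is the space of $d$-closed holomorphic $1$-forms. By the standard symmetrisation argument on nilmanifolds (averaging over $G$ with the bi-invariant measure on $\Gamma\backslash G$), every such form is cohomologous to an invariant one; since nothing in degree $(1,0)$ is exact, the form is itself invariant. Thus $H^{1,0}_{BC}(X)=H^{1,0}_{BC}(\mathfrak{g})$. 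Alternatively, the injectivity $H_{BC}(\mathfrak{g})\hookrightarrow H_{BC}(X)$ together with a direct count of closed invariant $(1,0)$-forms gives the same conclusion. Lemma \ref{lemma:complexspecialtype} then shows that $J$ is of special type, and in particular nilpotent.

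\textbf{Step 2: comparing harmonic spaces.} Since a special type complex structure is nilpotent, \cite[Theorem 3.8]{Ang} gives that the inclusion $\bigwedge^{\bullet,\bullet}\mathfrak{g}^*\hookrightarrow\mathcal{A}^{\bullet,\bullet}(X)$ induces an isomorphism $H^{\bullet,\bullet}_{BC}(\mathfrak{g})\simeq H^{\bullet,\bullet}_{BC}(X)$. For an invariant metric, the operators $\del,\dbar,\ast$ preserve invariant forms, and the invariant adjoints agree with the global $L^2$ adjoints because $\mathfrak{g}$ is unimodular (nilpotent). Therefore $\Delta_{BC}$ on $X$ restricts to the invariant $\Delta_{BC}$, which gives $\mathcal{H}_{\Delta_{BC}}(\mathfrak{g})\subseteq\mathcal{H}_{\Delta_{BC}}(X)$. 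Both spaces have the same finite dimension, by the two Hodge isomorphisms and the cohomology isomorphism, so they are equal. The harmonic forms on $X$ and on $\mathfrak{g}$ are thus literally the same forms, and the wedge product is the same in both settings. Hence $(\mathcal{H}_{\Delta_{BC}}(X),\wedge)$ is an algebra if and only if $(\mathcal{H}_{\Delta_{BC}}(\mathfrak{g}),\wedge)$ is.

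\textbf{Step 3: conclusion.} Theorem \ref{thm:5.1} applies to $\mathfrak{g}$, and combining it with Step 2 and the invariance of the SKT condition yields the statement. I expect the main obstacle to be Step 1, transferring the hypothesis from $X$ to $\mathfrak{g}$. The symmetrisation map is the tool I would try first; it commutes with $\del$ and $\dbar$ when $J$ is invariant and is the identity on invariant forms, which suffices in bidegree $(1,0)$.
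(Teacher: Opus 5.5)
Your overall route is the paper's. You pass to the Lie algebra and use \cite[Theorem 3.8]{Ang}, which applies because special type complex structures are nilpotent, to get $H^{\bullet,\bullet}_{BC}(\mathfrak{g})\simeq H^{\bullet,\bullet}_{BC}(X)$. You then identify $\mathcal{H}^{\bullet,\bullet}_{\Delta_{BC}}(\mathfrak{g})$ with $\mathcal{H}^{\bullet,\bullet}_{\Delta_{BC}}(X)$ for an invariant metric and invoke Theorem \ref{thm:5.1}. Your Step 2 (restriction of $\Delta_{BC}$ plus a dimension count) correctly spells out the paper's one-line ``in particular''. Your Step 1 addresses something the paper passes over silently. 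Lemma \ref{lemma:complexspecialtype} and Theorem \ref{thm:5.1} need $h^{1,0}_{BC}(\mathfrak{g})=n-1$, while the hypothesis is on $X$. Without this transfer one does not yet know that $J$ is nilpotent, so \cite{Ang} cannot be invoked.

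However, the justification you give for Step 1 does not work as written. The properties of the symmetrisation map $\mu$ that you list (it commutes with $\del,\dbar$ and is the identity on invariant forms) only make $\mu$ a left inverse of the inclusion. That gives $H^{1,0}_{BC}(\mathfrak{g})\hookrightarrow H^{1,0}_{BC}(X)$, i.e.\ $h^{1,0}_{BC}(\mathfrak{g})\le n-1$. It cannot exclude a closed holomorphic $1$-form $\alpha\neq 0$ with $\mu(\alpha)=0$. Moreover, ``cohomologous to an invariant form'' in Bott--Chern bidegree $(1,0)$ is literally the statement you want, so it cannot serve as the justification. Your ``alternative'' has the same defect: counting closed invariant $(1,0)$-forms just computes $h^{1,0}_{BC}(\mathfrak{g})$, which is the unknown, so it gives no reverse inequality.

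The missing ingredient is Nomizu's theorem. Take $\alpha$ with $\del\alpha=\dbar\alpha=0$, so $d\alpha=0$, and write $\alpha=\beta+dg$ with $\beta\in\mathfrak{g}^{\ast}_{\mathbb{C}}$ closed and $g$ a function. Applying $\mu$ gives $\beta=\mu(\alpha)$, since $\mu(dg)=d\mu(g)=0$. Because $J$ is invariant, $\mu$ preserves bidegree, so $\beta$ is of type $(1,0)$. Hence $dg=\alpha-\beta$ is of type $(1,0)$, so $\dbar g=0$ and $g$ is constant on the compact connected $X$. Therefore $\alpha=\beta$ is invariant. With $H^{1,0}_{BC}(X)=H^{1,0}_{BC}(\mathfrak{g})$ established this way, the rest of your argument goes through.
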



\section{Complex parallelisable manifolds}\label{sec:complexparallelisable}

\indent A compact complex manifold $X$ is \emph{complex parallelisable} \cite{Wang} if it has a trivial holomorphic tangent bundle. Equivalently, $X$ is complex parallelisable if it is biholomorphic to a complex coset
space $$X\simeq D\backslash G$$ of a complex Lie group $G$ over a discrete subgroup $D$ \cite[Theorem 1]{Wang}. Here henceforth, we will consider $X=D\backslash G$ to be a complex parallelizable solvmanifold, i.e., $G$ is a complex solvable Lie group and $\Gamma:=D\subset G$ is a lattice. Moreover, we will always assume that $G$ is not Abelian, so that $X$ is not a complex torus.\\
Let $X$ be a complex parallelisable solvmanifold of complex dimension $n$ and let us  denote by $r=\text{dim} H_{BC}^{1,0}(\mathfrak{g})$. Since $G$ is solvable not Abelian, we have $1\le r<n$. By Lie's theorem, there exists a basis $\{\phi^{1},\dots, \phi^{n}\}\subset \mathfrak{g}^{\ast}$ such that
\begin{equation}\label{eq:cplx-par-solv}
\begin{cases}
    d\phi^\nu=0, \quad \nu\in\{1,\dots, r\}\\
    d\phi^\nu=\xi_{\nu}\wedge \phi^\nu+\eta_{\nu}, \quad \nu\in\{r+1,\dots, n\},
\end{cases}
\end{equation}
where \[\xi_{\nu}=\sum_{\mu=1}^ra_{\nu\mu}\phi^{\mu},\] and $\eta_{\nu}$ is a linear combination of wedge products of the preceding basis elements $\phi^{1},\dots,\phi^{\nu-1}$; see \cite[§2]{N}. In particular, $$d\xi_\nu=0,\qquad \nu\in\{r+1,\dots, n\}.$$
Starting from \eqref{eq:cplx-par-solv}, in the next two subsections we will prove that  there always exists a non-trivial triple ABC-Massey product on complex parallelisable solvmanifolds. In particular, by \cite[Lemma 2.1.]{ST1}, there exists an injective inclusion map:
\[
\iota:
\left\{
\text{triple \(ABC\)-Massey products on } (\mathfrak{g},J)
\right\}
\hookrightarrow
\left\{
\text{triple \(ABC\)-Massey products on } (\Gamma\backslash G,J)
\right\},
\]
which allows one to prove the existence of non-trivial triple ABC-Massey products on the corresponding compact complex manifold by means of a computation at the level of left-invariant forms.

We will first consider the special case of complex parallelisable nilmanifolds. The case of complex parallelisable solvmanifolds will be treated in the subsequent section.
\subsection{Complex parallelisable nilmanifolds} 
\begin{thm}\label{thm:ABC-unimod-abel-nil}
Let $X$ be a complex parallelisable nilmanifold. Then $X$ admits a non-vanishing triple ABC-Massey product.
\end{thm}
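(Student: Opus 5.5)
We work at the level of the complex nilpotent Lie algebra $\mathfrak{g}$. By \cite[Lemma 2.1]{ST1}, a non-vanishing triple ABC-Massey product on $(\mathfrak{g},J)$ gives one on $X$, so it suffices to produce one on $\mathfrak{g}$. Since $G$ is nilpotent, all $a_{\nu\mu}$ in \eqref{eq:cplx-par-solv} vanish, and every $d\phi^\nu$ is a holomorphic $(2,0)$-form in $\phi^1,\dots,\phi^{\nu-1}$. In particular $\dbar\phi^\nu=0$ for all $\nu$ and $d=\del$ on $\bigwedge^{\bullet,0}$. We choose the basis adapted to the lower central series, so that $\phi^1,\dots,\phi^r$ span the closed $(1,0)$-forms. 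Since $\mathfrak{g}$ is not Abelian, there is an index $\nu_0>r$ with
\[
d\phi^{\nu_0}=\del\phi^{\nu_0}=\sum_{i<j\le r}c_{ij}\phi^{ij}\neq 0 .
\]
This is a nonzero element of $\Lambda^2$ of the closed forms. Choosing it from the last step of the central series makes $Z_{\nu_0}$ central, and we will use that $Z_{\nu_0}$ is central.

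Our candidate is
\[
\mathfrak{a}_{12}=[\phi^{i}],\quad \mathfrak{a}_{23}=[\overline{\phi^{\nu_0}}\ \text{-type partner}],\quad \mathfrak{a}_{34}=[\phi^{j}].
\]
The classical trick uses the $(1,1)$ Bott-Chern classes $\alpha=\phi^i\wedge\overline{\phi^k}$ with $i,k\le r$, which are $d$-closed. Concretely, after a linear change among $\phi^1,\dots,\phi^r$ we arrange that $d\phi^{\nu_0}$ contains $\phi^{12}$ with nonzero coefficient. We then take
\begin{itemize}
\item[] $\alpha_{12}=\phi^1$, which lies in $H^{1,0}_{BC}$;
\item[] $\alpha_{23}=\phi^{2}\wedge\overline{\phi^{1}}$;
\item[] $\alpha_{34}=\overline{\phi^{2}}$, or a suitable conjugate closed form.
\end{itemize}
We then need to check the products. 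The product $\alpha_{12}\wedge\alpha_{23}=\phi^{12}\wedge\overline{\phi^1}$ should equal $\del\dbar$ of something built from $\phi^{\nu_0}\wedge\overline{\phi^{\nu_0}}$-type terms. In the complex parallelisable case $\del\dbar(\phi^{\nu_0}\wedge\overline{\phi^{\mu}})=-d\phi^{\nu_0}\wedge\overline{d\phi^{\mu}}$. This suggests the cleaner choice
\[
\alpha_{12}=\alpha_{34}^{\ \text{conj}}\quad\text{with}\quad \alpha_{12}=d\phi^{\nu_0}\ \text{replaced by closed } (1,1)\text{-forms}.
\]
Explicitly, we take $\alpha_{12}=\phi^1\wedge\overline{\phi^1}$, $\alpha_{23}=\phi^2\wedge\overline{\phi^2}$ and $\alpha_{34}=\phi^1\wedge\overline{\phi^1}$. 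All three are $\del$- and $\dbar$-closed. The products are $\phi^{12}\wedge\overline{\phi^{12}}$, which is $\del\dbar$-exact via $f=c^{-1}\overline{c}^{-1}\phi^{\nu_0}\wedge\overline{\phi^{\nu_0}}$, up to correcting the remaining terms of $d\phi^{\nu_0}$. To absorb those terms we replace $\phi^1,\phi^2$ by a pair $\theta_1,\theta_2$ chosen so that $d\phi^{\nu_0}=\theta_1\wedge\theta_2+\sum_k\theta_{2k+1}\wedge\theta_{2k+2}$, which is a normal form for a 2-form. We take $\alpha_{12}=\alpha_{34}=\theta_1\wedge\overline{\theta_1}$ and $\alpha_{23}=\theta_2\wedge\overline{\theta_2}$. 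We then verify directly that the products $\theta_1\overline{\theta_1}\theta_2\overline{\theta_2}$ are $\del\dbar$-exact by computing $\del\dbar$ on the finitely many forms $\phi^{\nu}\wedge\overline{\phi^{\mu}}$. If this fails, we fall back to the asymmetric triple $\phi^1$, $\phi^2\wedge\overline{\phi^{1}}$, $\overline{\phi^2}$. The resulting Massey representative is then
\[
\pm\,\theta_1\wedge\overline{\theta_1}\wedge\phi^{\nu_0}\wedge\overline{\phi^{\nu_0}}\ \text{(plus corrections)},
\]
a $(3,3)$-form.

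\textbf{Main obstacle: non-vanishing.} We must show that this representative is not $\del+\dbar$-exact modulo the indeterminacy $\mathfrak{a}_{12}\cup H_A+H_A\cup\mathfrak{a}_{34}$. The plan is to pair with a Bott-Chern class via the non-degenerate duality $H_{A}^{p,q}\times H_{BC}^{n-p,n-q}\to\C$. Such a duality exists on a unimodular $\mathfrak{g}$ through the Hodge $\ast$ of Lemma \ref{HodgeDec}. We seek a $d$-closed form $\beta$ of bidegree $(n-3,n-3)$, for instance the wedge of $\phi^\mu\wedge\overline{\phi^\mu}$ over all $\mu\notin\{1,\nu_0,\ldots\}$, such that two conditions hold. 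First, the representative wedged with $\beta$ is a nonzero multiple of the volume form. Second, $\beta$ wedged with $\theta_1\overline{\theta_1}\wedge\gamma$ integrates to zero for every Aeppli class $\gamma$, which kills the indeterminacy. The second condition should follow because $\theta_1\wedge\overline{\theta_1}\wedge\beta$ already contains $\theta_1\overline{\theta_1}$ twice, so its pairing with the indeterminacy vanishes. The delicate point is to choose $\beta$ closed while still complementary to $\phi^{\nu_0}\overline{\phi^{\nu_0}}$. To handle it, we take $\beta$ built only from closed forms $\phi^\mu$ with $\mu\le r$, together with top-degree holomorphic forms, which are closed by unimodularity.
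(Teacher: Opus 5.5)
Your proposal has a genuine gap: it never arrives at a triple that is both admissible and non-trivial, and both concrete candidates you end with fail. The final, symmetric triple $\alpha_{12}=\alpha_{34}=a:=\theta_1\wedge\overline{\theta_1}$, $\alpha_{23}=b:=\theta_2\wedge\overline{\theta_2}$ vanishes identically whenever it is defined. All forms involved have even degree, so $a\wedge b=b\wedge a$, and you may take $f_{13}=f_{24}=f$ of bidegree $(1,1)$. The representative is then $a\wedge f-f\wedge a=0$. Your claimed representative $\pm\theta_1\wedge\overline{\theta_1}\wedge\phi^{\nu_0}\wedge\overline{\phi^{\nu_0}}$, which is a $(2,2)$-form and not a $(3,3)$-form, is cancelled exactly by the second term.

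This triple is also often not defined. Since $\dbar\phi^\nu=0$, one has $\del\dbar(\phi^\nu\wedge\overline{\phi^\mu})=-d\phi^\nu\wedge\overline{d\phi^\mu}$, so $a\wedge b$ is $\del\dbar$-exact if and only if $\theta_1\wedge\theta_2\in d\bigl((\mathfrak{g}^{\ast}_{\C})^{1,0}\bigr)$. This fails, for instance, for $d\phi^1=\dots=d\phi^4=0$, $d\phi^5=\phi^{12}+\phi^{34}$, where no non-zero exact $(2,0)$-form is decomposable.

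The fallback triple $[\phi^1],[\phi^2\wedge\overline{\phi^1}],[\overline{\phi^2}]$ is never admissible. Indeed, $\del\dbar$ vanishes on invariant $(1,0)$-forms, so $\phi^{12\overline{1}}$ is a non-zero class in $H^{2,1}_{BC}(\mathfrak{g})$.

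Your duality step is self-defeating as well. If $\theta_1\wedge\overline{\theta_1}\wedge\beta$ vanishes because $\beta$ already contains $\theta_1\wedge\overline{\theta_1}$, then $\beta$ also annihilates your representative, which carries the same factor.

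The missing idea is to pick classes so that one cup product vanishes for bidegree reasons. Then the representative is a single term that a pairing can detect. The paper takes:
\begin{itemize}
\item $\alpha_{12}=\del\phi^{r+1\cdots n}\in H^{n-r+1,0}_{BC}(\mathfrak{g})$;
\item $\alpha_{23}=\overline{\alpha_{12}}$;
\item $\alpha_{34}=\phi^{\overline{1}\cdots\overline{r}}$.
\end{itemize}
Then $\alpha_{12}\wedge\alpha_{23}=\pm\del\dbar\,\phi^{r+1\cdots n\,\overline{r+1}\cdots\overline{n}}$, while $\alpha_{23}\wedge\alpha_{34}=0$ because it would be a $(0,n+1)$-form. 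The representative is $\pm\phi^{r+1\cdots n\,\overline{1}\cdots\overline{n}}$, and the indeterminacy reduces to $H^{n-r,n-r}_A(\mathfrak{g})\cup[\phi^{\overline{1}\cdots\overline{r}}]$.

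Wedging a putative relation $\pm\phi^{r+1\cdots n\,\overline{1}\cdots\overline{n}}=\xi\wedge\phi^{\overline{1}\cdots\overline{r}}+\del R+\dbar S$ with the closed form $\phi^{1\cdots r}$ has three effects:
\begin{itemize}
\item it kills $\del R+\dbar S$, since $d$ vanishes on $(2n-1)$-forms by unimodularity;
\item it turns the left-hand side into a non-zero multiple of the volume form;
\item it reduces the indeterminacy term to $\langle\phi^{1\cdots r\,\overline{1}\cdots\overline{r}},\ast\xi\rangle$.
\end{itemize}
This last pairing vanishes because $\phi^{1\cdots r\,\overline{1}\cdots\overline{r}}$ is $\del\dbar$-exact, using a non-zero coefficient $A_{lm}$ of $d\phi^{r+1}$, while $\ast\xi$ is Bott--Chern harmonic.
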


\begin{proof}
Let $\{\phi^1,\dots, \phi^n\}$ be a basis of $(1,0)$-forms satisfying equations \eqref{eq:cplx-par-solv}. In the nilpotent case, we have 
$$\quad \xi_\nu=0, \quad \forall \nu\in\{r+1,\dots, n\}.$$
Then, the structure equations become
\begin{equation*}
\begin{cases}
    d\phi^\nu=0, \quad \nu\in\{1,\dots, r\}\\
    d\phi^\nu=\eta_{\nu}, \quad \nu\in\{r+1,\dots, n\}.
\end{cases}
\end{equation*}
Since $\{\phi^1,\dots, \phi^n\}$ is a complex $(1,0)$-coframe and $\phi^j$ is holomorphic for $j=1,\ldots,n$, we can consider the following Bott-Chern cohomology classes
$$
\mathfrak{a}_{12}=[\alpha_{12}]=[\del\phi^{r+1\cdots n}]\in H_{BC}^{n-r+1, 0}(\mathfrak{g}),\quad\quad \mathfrak{a}_{23}=[\alpha_{23}]=[\dbar\phi^{\overline{r+1}\cdots \overline{n}}]\in H_{BC}^{0, n-r+1}(\mathfrak{g}).
$$
Note that we can decompose $d\phi^{r+1\cdots n}$ as the sum of two components
$$ d\phi^{r+1\cdots n}=(d\phi^{r+1})\wedge\phi^{r+2\cdots n}-\phi^{r+1}\wedge d\phi^{r+2\cdots n}.$$
Since the first term on the right hand side does not contain the form $\phi^{r+1}$, the two terms are linearly independent.
Since $d\phi^{r+1}\wedge\phi^{r+2\cdots n}\ne 0$, the form $\alpha_{12}$ is non-trivial. \\
Furthermore, as a consequence of the structure equations, we can also take
$$
\mathfrak{a}_{34}=[\alpha_{34}]=[\phi^{\overline{1}\cdots \overline{r}}]\in H_{BC}^{0, r}(\mathfrak{g}).
$$
Since 
$$
\alpha_{12}\wedge \alpha_{23}=\del\dbar\,(-1)^{n-r}\phi^{r+1\cdots n\,\overline{r+1}\cdots \overline{n}}\,,
$$
and, due to a bidegree issue, 
$$ \alpha_{23}\wedge \alpha_{34}=0,$$
we get
$$\mathfrak{a}_{12}\cup \mathfrak{a}_{23}=0,\quad \mathfrak{a}_{23}\cup\mathfrak{a}_{34}=0.$$
Therefore, the following $ABC$-Massey product is well-defined:
\begin{eqnarray*}
\langle\mathfrak {a}_{12},\mathfrak {a}_{23},\mathfrak {a}_{34}\rangle_{ABC}&=&[(-1)^{n-r+1}\,\phi^{r+1\cdots n\,\overline{r+1}\cdots \overline{n}} \wedge \phi^{\overline{1}\cdots \overline{r}}]=[(-1)^{n-r+1}(-1)^{(n-r)r}\,\phi^{r+1\cdots n\,\overline{1}\cdots \overline{n}}]\\[5pt]
&\in& \frac{H_A^{n-r,n}(\mathfrak{g})}{H_A^{n-r, n-r}(\mathfrak{g})\cup[\phi^{\overline{1}\dots \overline{r}}]}.
\end{eqnarray*}
We will show that 
$$
[(-1)^{(n-r)(r+1)+1}\,\phi^{r+1\cdots n\,\overline{1}\cdots \overline{n}}]\not\in H_A^{n-r, n-r}(\mathfrak{g})\cup[\phi^{\overline{1}\dots \overline{r}}],
$$
so that the triple $ABC$-Massey product $\langle\mathfrak {a}_{12},\mathfrak {a}_{23},\mathfrak {a}_{34}\rangle_{ABC}$ is non-vanishing on $\mathfrak{g}$.\\\\
By contradiction, suppose that there exists an Aeppli harmonic form
$$\xi\in \textstyle\bigwedge^{n-r, n-r}\mathfrak{g}^{\ast},$$
and forms $R\in\textstyle\bigwedge^{n-r-1,n}\mathfrak{g}^{\ast},\quad S\in \textstyle\bigwedge^{n-r,n-1}\mathfrak{g}^{\ast}$ such that\\
\begin{equation}\label{nilpeq1}
(-1)^{(n-r)(r+1)+1}\,\phi^{r+1\cdots n\,\overline{1}\cdots \overline{n}}=\xi\wedge \phi^{\overline{1}\dots \overline{r}}+\del R+\dbar S.
\end{equation}
\\
Let $g=\sum_{j=1}^n\phi^j\otimes \overline{\phi^j}$ be the Hermitian metric on $\mathfrak{g}$, we denote by $\omega$ its associated fundamental form and by $\Omega=\frac{\omega^{n}}{n!}$ the standard volume form.\\
Multiplying equation \eqref{nilpeq1} by $$\ast(\phi^{r+1\cdots n\,\overline{1}\cdots \overline{n}})=\epsilon\,\phi^{1\cdots r},$$ where $\epsilon\in\mathbb{R}\setminus\{0\};$
we get\\ 
\begin{equation}\label{nilpeq2}
    C\,\Omega= \big( \xi\wedge \phi^{\overline{1}\dots \overline{r}}+\del R+\dbar S\big)\wedge\ast(\phi^{r+1\cdots n\,\overline{1}\cdots \overline{n}}),\qquad C\in\mathbb{C}\setminus\{0\}.
\end{equation}
Since $d\phi^{1}=\dots =d\phi^{r}=0$, it is immediate to see that
$$d^{\ast}(\phi^{r+1\cdots n\,\overline{1}\cdots \overline{n}})=0,$$  which implies that $\phi^{r+1\cdots n\,\overline{1}\cdots \overline{n}}$ is Aeppli-harmonic. As a result, we can rewrite the last equation as
\begin{equation*}
    C\,\Omega= \xi\wedge \phi^{\overline{1}\cdots \overline{r}}\wedge\ast(\phi^{r+1\cdots n\,\overline{1}\cdots \overline{n}})+d\, P,
\end{equation*}
where 
$$P=\big( R+ S\big)\wedge\ast(\phi^{r+1\cdots n\,\overline{1}\cdots \overline{n}}).$$
However, since $\mathfrak{g}$ is nilpotent and therefore unimodular, $d\vert_{\bigwedge^{2n-1}\mathfrak{g}^{\ast}}\equiv 0$. Thus, $dP=0$ and
\begin{equation}\label{nilpeq3}
    C\,\Omega= \xi\wedge \phi^{\overline{1}\cdots \overline{r}}\wedge\ast(\phi^{r+1\cdots n\,\overline{1}\cdots \overline{n}}).
\end{equation}
Furthermore, we can write the right-hand side as:
$$
 \xi\wedge \phi^{\overline{1}\cdots \overline{r}}\wedge\ast(\phi^{r+1\cdots n\,\overline{1}\cdots \overline{n}})=\epsilon\, \xi\wedge \phi^{\overline{1}\cdots\overline{r}}\wedge\phi^{1\cdots r}=(-1)^{r^{2}}\epsilon\,\phi^{1\cdots r\,\overline{1}\cdots \overline{r}}\wedge\ast(\ast\xi).
$$
We can take the inner product of both sides of \eqref{nilpeq3} by $\Omega$
\[
0\neq C\langle\Omega,\Omega\rangle=\langle (-1)^{r^{2}}\epsilon\,\phi^{1\cdots r\,\overline{1}\cdots \overline{r}}\wedge\ast(\ast\xi),\Omega\rangle=(-1)^{r^{2}}\epsilon\,\langle \phi^{1\cdots r\overline{1}\cdots \overline{r}},\ast \xi\rangle \langle\Omega,\Omega\rangle.
\]
Since $C$ and $\epsilon$ are nonzero constants and $\Omega$ is a volume form, it suffices to prove the following claim in order to obtain a contradiction.
\\\textbf{Claim:} $$\langle \phi^{1\cdots r\,\overline{1}\cdots \overline{r}},\, \ast\xi\rangle=0.$$
From Lemma \ref{HodgeDec}, we have that
$$\mathcal{H}_{\Delta_{BC}}^{n-r,n-r}(\mathfrak{g})\perp\imm(\del\dbar_{|\bigwedge^{r-1,r-1}\mathfrak{g}^*}).$$
Therefore, we want to show 
\begin{equation*}
    \phi^{1\cdots r\,\overline{1}\cdots\overline{r}}\in \text{im}(\del\dbar_{|\bigwedge^{r-1,r-1}\mathfrak{g}^*}),
\end{equation*}
as $\ast\xi\in\mathcal{H}_{\Delta_{BC}}^{r,r}(\mathfrak{g}).$\\
From the structure equations, we have
\begin{equation*}
    d\phi^{r+1}=\sum_{1\le i<j\leq r}A_{ij}\,\phi^{i}\wedge\phi^{j},
\end{equation*}
and, in particular, we can suppose $A_{lm}\ne 0$ for some $1\le l<m\le r$. Then, by a direct computation, we get
\begin{align*}
    \del\dbar\phi^{1\cdots \hat{l}\cdots \hat{m}\cdots r\, r+1\,\overline{1} \cdots  \hat{\overline{l}}\cdots \hat{\overline{m}}\cdots \overline{r}\,\overline{r+1}}&=\\
    &=(-1)^{r-1} \del\phi^{1\cdots \hat{l}\cdots \hat{m}\cdots r\, r+1}\wedge\dbar\phi^{\overline{1} \cdots\hat{\overline{l}}\cdots\hat{\overline{m}} \cdots \overline{r}\,\overline{r+1}}\\
    &=(-1)^{r-1}|A_{lm}|^{2}\phi^{1\cdots r\,\overline{1}\cdots \overline{r}},
\end{align*}
where the symbol $\hat{\,}$ denotes that the corresponding index is omitted. Therefore, in this way, we can write
$$ \phi^{1\cdots r\, \overline{1}\cdots \overline{r}}=  \del\dbar\, \big(\dfrac{(-1)^{r-1}}{|A_{lm}|^{2}}\, \phi^{1\cdots \hat{l}\cdots \hat{m}\cdots r\, r+1\,\overline{1} \cdots  \hat{\overline{l}}\cdots \hat{\overline{m}}\cdots \overline{r}\,\overline{r+1}}\big).$$
\end{proof}
\subsection{Complex solvable Lie algebras}
We now consider the more general case of complex parallelisable solvmanifolds. First, we work at the level of the Lie algebra (complex unimodular solvable Lie algebras) and then deduce the corresponding result for compact solvmanifolds.
\begin{thm}\label{thm:ABC-unimod}
Let $\mathfrak{g}$ be any complex unimodular solvable
Lie algebra. Then $\mathfrak{g}$ admits a non-vanishing triple ABC-Massey product.
\end{thm}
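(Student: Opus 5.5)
The plan is to run the nilpotent argument of Theorem \ref{thm:ABC-unimod-abel-nil} again in the general setting. Fix a basis $\{\phi^1,\dots,\phi^n\}$ of holomorphic $(1,0)$-forms satisfying \eqref{eq:cplx-par-solv}. By the definition of $r$, exactly $\phi^1,\dots,\phi^r$ are closed. Since $\mathfrak{g}$ is a complex Lie algebra, every $\phi^\nu$ is holomorphic, so $\dbar\phi^\nu=0$ and $d\phi^\nu=\del\phi^\nu$.

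Complex unimodularity means $\operatorname{tr}\operatorname{ad}=0$. In the triangular basis this trace is $\sum_{\nu>r}\xi_\nu$ up to sign, so $\sum_{\nu=r+1}^n\xi_\nu=0$. Consequently $d$ vanishes on $\bigwedge^{n-1,0}$ and on $\bigwedge^{2n-1}\mathfrak{g}^*_{\C}$. I would take the same three classes as in the nilpotent case:
$$
\alpha_{12}=\del\phi^{r+1\cdots n},\qquad \alpha_{23}=\dbar\phi^{\overline{r+1}\cdots\overline{n}},\qquad \alpha_{34}=\phi^{\overline1\cdots\overline r}.
$$

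\textbf{Well-definedness.} These forms are $\del$- and $\dbar$-closed, the relations $\alpha_{12}\wedge\alpha_{23}=\pm\del\dbar\phi^{r+1\cdots n\,\overline{r+1}\cdots\overline n}$ and $\alpha_{23}\wedge\alpha_{34}=0$ hold exactly as before, and the Massey representative is again $\pm\phi^{r+1\cdots n\,\overline1\cdots\overline n}$. We also need $\alpha_{12}\neq0$. If $d\phi^{r+1\cdots n}=0$, then, using the trace relation, contracting against the $\phi^{r+1}$-component would force $d\phi^{r+1}\wedge\phi^{r+2\cdots n}=0$ with $d\phi^{r+1}$ a form in $\phi^1,\dots,\phi^{r+1}$ only. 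I would check that this contradicts $d\phi^{r+1}\neq0$. This is routine but must be verified, since the $\xi$-terms now appear.

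\textbf{Reduction to a claim.} Exactly as in the nilpotent proof, assume the representative equals $\xi\wedge\phi^{\overline1\cdots\overline r}+\del R+\dbar S$ with $\xi$ Aeppli harmonic. Wedge with $\ast(\phi^{r+1\cdots n\,\overline1\cdots\overline n})=\epsilon\,\phi^{1\cdots r}$. This form is $d$-closed because $\phi^1,\dots,\phi^r$ are closed, and $d$ vanishes on $(2n-1)$-forms by unimodularity. So the argument reduces verbatim to proving
$$
\phi^{1\cdots r\,\overline1\cdots\overline r}\in\imm\big(\del\dbar_{|\bigwedge^{r-1,r-1}\mathfrak{g}^*}\big).
$$
Then orthogonality to $\mathcal{H}^{r,r}_{\Delta_{BC}}$ from Lemma \ref{HodgeDec} gives the contradiction.

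\textbf{Main obstacle: the claim.} In the nilpotent case we used $d\phi^{r+1}=\sum A_{ij}\phi^{ij}$. Now $d\phi^{r+1}=\xi_{r+1}\wedge\phi^{r+1}+\eta_{r+1}$, so that trick fails. Instead I would show that the holomorphic form $\phi^{1\cdots r}$ is $d$-exact in the complex Chevalley–Eilenberg complex of the complex Lie algebra, say $\phi^{1\cdots r}=d\psi=\del\psi$ with $\psi\in\bigwedge^{r-1,0}$. Then
$$
\del\dbar(\psi\wedge\overline\psi)=\pm\del\psi\wedge\overline{\del\psi}=\pm\phi^{1\cdots r\,\overline1\cdots\overline r},
$$
which proves the claim.

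For exactness I would use Poincaré duality of the unimodular complex Lie algebra cohomology: the pairing $H^{r}\times H^{n-r}\to H^n\cong\C$ is perfect. Hence $\phi^{1\cdots r}$ is exact if and only if $\phi^{1\cdots r}\wedge\beta=0$ for every closed holomorphic $(n-r)$-form $\beta$. That is, no closed $(n-r,0)$-form may have a nonzero coefficient on $\phi^{r+1\cdots n}$.

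To prove this, write $\beta=c\,\phi^{r+1\cdots n}+\beta'$, where $\beta'$ contains at least one of $\phi^1,\dots,\phi^r$. Then compare the component of $d\beta$ of type (exactly one factor among $\phi^1,\dots,\phi^r$ together with all of $\phi^{r+1},\dots,\phi^n$ but one), using the triangular structure and the nonvanishing $d\phi^{r+1\cdots n}\neq0$ from above, to deduce $c=0$. This filtration/leading-term argument is the step I expect to require the most care. If it resists, the fallback is to verify exactness directly by induction on $n-r$ using the triangular structure equations.
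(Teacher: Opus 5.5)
\textbf{There is a genuine gap: the cut-off index $r$ is the wrong one in the non-nilpotent case.} You reuse the nilpotent classes built on $r=h^{1,0}_{BC}$. But the trace identity $\sum_{\nu>r}\xi_\nu=0$ cancels exactly the $\xi$-contributions to $\del\phi^{r+1\cdots n}$. What remains is $\alpha_{12}=\sum_{\nu}\pm\,\eta_\nu\wedge\phi^{r+1\cdots\hat{\nu}\cdots n}$. This vanishes whenever no $\eta_\nu$ has a component in $\bigwedge^2\langle\phi^1,\dots,\phi^r\rangle$.

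\emph{$\alpha_{12}$ can vanish.} For $r=1$ it always vanishes. Indeed, $\phi^{2\cdots n}$ is an $(n-1,0)$-form, and you yourself observed that $d$ kills $\bigwedge^{n-1,0}$ by unimodularity. Take Nakamura's algebra $\C\ltimes\C^2$, with $d\phi^1=0$, $d\phi^2=-\phi^{12}$, $d\phi^3=\phi^{13}$. There $\del\phi^{23}=-\phi^{123}+\phi^{123}=0$ although $d\phi^2\neq0$. So the ``routine check'' that $d\phi^{r+1\cdots n}=0$ contradicts $d\phi^{r+1}\neq0$ is false. The same happens with $r=2$ for $(\C\ltimes\C^2)\oplus\C$. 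In these cases $\mathfrak{a}_{12}=0$ and your Massey product is trivial.

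\emph{Your key claim can fail.} For $r=1$ there are no nonzero exact $1$-forms in the Chevalley--Eilenberg complex, and $\del\dbar(\bigwedge^{0,0}\mathfrak{g}^*)=0$. Hence $\phi^{1\overline1}\notin\imm\del\dbar$. More generally, by your own Poincar\'e-duality criterion: whenever $\alpha_{12}=0$, the closed form $\phi^{r+1\cdots n}$ pairs nontrivially with $\phi^{1\cdots r}$. So the leading-term argument cannot succeed there.

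\textbf{The fix is to cut at $\nu_0:=\min\{\nu>r:\ \xi_\nu\neq0\}$, which is what the paper does.} The nilpotent case, where no such $\nu_0$ exists, is already settled.

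\emph{The new classes.} Take $\alpha_{12}=\del\phi^{\nu_0+1\cdots n}$, $\alpha_{23}=\overline{\alpha_{12}}$ and $\alpha_{34}=\phi^{\overline1\cdots\overline{\nu_0}}$. The form $\alpha_{34}$ is closed because $d$ preserves $\bigwedge^\bullet\langle\phi^1,\dots,\phi^{\nu_0}\rangle$.

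\emph{Non-vanishing of $\alpha_{12}$.} Now the trace identity works in your favour. Since $\xi_\nu=0$ for $r<\nu<\nu_0$, the form $\alpha_{12}$ contains the term $-\xi_{\nu_0}\wedge\phi^{\nu_0+1\cdots n}\neq0$. This term is independent of the $\eta$-terms, each of which misses some $\phi^\nu$ with $\nu>\nu_0$. Note also that $\nu_0<n$, since the $\xi$'s sum to zero.

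\emph{Exactness.} It becomes explicit and needs no Poincar\'e duality. If $a_{\nu_0\mu_0}\neq0$, then $\del\phi^{1\cdots\hat{\mu_0}\cdots\nu_0}=\pm a_{\nu_0\mu_0}\,\phi^{1\cdots\nu_0}$. This holds because $d\phi^\nu=\eta_\nu$ for $r<\nu<\nu_0$ involves only preceding indices. With $\psi=\phi^{1\cdots\hat{\mu_0}\cdots\nu_0}$, your identity $\del\dbar(\psi\wedge\overline\psi)=\pm\del\psi\wedge\overline{\del\psi}$ gives $\phi^{1\cdots\nu_0\,\overline1\cdots\overline{\nu_0}}\in\imm\del\dbar$. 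This is the paper's computation.

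\emph{The rest carries over.} Your reduction then goes through verbatim with $r$ replaced by $\nu_0$: wedging with $\ast$, $dP=0$ by unimodularity, and orthogonality from Lemma \ref{HodgeDec}.
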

Firstly, we give an explicit proof of \cite[Lemma 1.3]{N} which will be useful in the proof of the Theorem.
\begin{lemma}\label{Lemma2}
\[
\sum_{\nu=r+1}^n\xi_{\nu}=0.
\]
\begin{proof}
Let $\{\phi^1,\dots, \phi^n\}$ be a basis of $(1,0)$-forms satisfying equations \eqref{eq:cplx-par-solv}, and let ${Z_1,\dots,Z_n}$ be its dual basis. We note that each $\xi_{\nu}$ can be written as
\[
\xi_{\nu}=\sum_{\mu=1}^r B_{\mu}^{\nu}\phi^{\mu}, \qquad B_{\mu}^{\nu}=\phi^{\nu}([Z_\nu,Z_\mu])=-\phi^{\nu}(\text{ad}_{Z_{\mu}}(Z_{\nu})).
\]
Then
\begin{align*}
\sum_{\nu=r+1}^n\xi_{\nu}&=\sum_{\nu=r+1}^n\left(\sum_{\mu=1}^r B_{\mu}^\nu \phi^\mu\right)=\sum_{\mu=1}^r\left(\sum_{\nu=r+1}^nB_{\mu}^\nu\right)\phi^\mu\\
&=-\sum_{\mu=1}^r\left(\sum_{\nu=r+1}^n\phi^{\nu}(\text{ad}_{Z_{\mu}}(Z_{\nu}))\right)\phi^\mu\\
&=-\sum_{\mu=1}^r\text{tr}(\text{ad}_{Z_\mu})\phi^\mu\\
&=0,
\end{align*}
since $\mathfrak{g}$ is unimodular.
\end{proof}
\end{lemma}

\begin{proof}
Let ${\phi^1,\dots,\phi^n}$ be a basis of $(1,0)$-forms satisfying equations \eqref{eq:cplx-par-solv}. Let us assume that $\nu_0\in\{r+1,\dots, n\}$ is the lowest index such that
\[
0\neq \xi_{\nu_0}=\sum_{\mu=1}^ra_{\nu_0\mu}\phi^{\mu}, \qquad a_{\nu_0\mu}\in \C.
\]
Otherwise, the Lie algebra $\mathfrak{g}$ is nilpotent, and the proof follows from Theorem \ref{thm:ABC-unimod-abel-nil}.\\
Since $\{\phi^1,\dots, \phi^{n}\}$ is a complex $(1,0)$-coframe and $\phi^j$ is holomorphic for $j=1,\ldots,n$, we can consider the following Bott-Chern cohomology classes
$$
\mathfrak{a}_{12}=[\alpha_{12}]=[\del\phi^{\nu_0+1\cdots n}]\in H_{BC}^{n-\nu_0+1, 0}(\mathfrak{g}),\quad\quad \mathfrak{a}_{23}=[\alpha_{23}]=[\dbar\phi^{\overline{\nu_0+1}\cdots \overline{n} }]\in H_{BC}^{0, n-\nu_0+1}(\mathfrak{g}).
$$
and
$$
\mathfrak{a}_{34}=[\alpha_{34}]=[\phi^{\overline{1}\cdots \overline{\nu_0}}]\in H_{BC}^{0, \nu_0}(\mathfrak{g}).
$$
Note that, by Lemma \ref{Lemma2},  $\alpha_{12}$ can be written as
\begin{align*}
\alpha_{12}=\del \phi^{\nu_0+1\cdots n}&=\left(\sum_{\nu=\nu_0+1}^n\xi_\nu\right) \wedge \phi^{\nu_0+1\cdots n}+\sum_{\nu=\nu_0+1}^n(-1)^{\nu_0+1+\nu}\eta_{\nu}\wedge \phi^{\nu_0+1\cdots \hat{\nu}\cdots n}\\
& =-\xi_{\nu_0}\wedge \phi^{\nu_0+1\cdots n}+\sum_{\nu=\nu_0+1}^n(-1)^{\nu_0+1+\nu}\eta_{\nu}\wedge \phi^{\nu_0+1\cdots \hat{\nu}\cdots n},
\end{align*}
since $\sum_{\nu=r+1}^n \xi_{\nu}=0$ and $\xi_\nu=0$ for $\nu\in\{r+1,\cdots, \nu_{0}-1\}$.

The first term of the last expression is linearly independent from all the summands on the right, as none of terms of the form $\eta_\nu\wedge \phi^{\nu_0+1\cdots \hat{\nu}\cdots n}$ contains the term $\phi^{\nu_0+1\cdots n}$. Moreover, $\xi_{\nu_0}\wedge\phi^{\nu_0+1\cdots n}\neq 0$, so that
\[
\alpha_{12}\neq 0,
\]
and, consequently, also $\alpha_{23}=\overline{\alpha_{12}}\neq 0$. Moreover, from structure equations, it follows that $d\alpha_{34}=0$.
\\ 
By a direct calculation, it follows
$$
\alpha_{12}\wedge \alpha_{23}=\del\dbar\,(-1)^{n-\nu_0}\phi^{\nu_0+1\cdots n\,\overline{\nu_0+1}\cdots \overline{n}}\,,
$$
and $\alpha_{23}\wedge \alpha_{34}=0$, by bidegree.
As a consequence, 
$$\mathfrak{a}_{12}\cup \mathfrak{a}_{23}=0,\quad \mathfrak{a}_{23}\cup\mathfrak{a}_{34}=0,$$hence, the following $ABC$-Massey product is well-defined:
\begin{eqnarray*}
\langle\mathfrak {a}_{12},\mathfrak {a}_{23},\mathfrak {a}_{34}\rangle_{ABC}&=&[(-1)^{(n-\nu_0)(\nu_0+1)+1}\,\phi^{\nu_0+1\cdots n} \wedge \phi^{\overline{1}\cdots \overline{n}}]\\[5pt]
&\in& \frac{H_A^{n-\nu_0,n}(\mathfrak{g})}{H_A^{n-\nu_0, n-\nu_0}(\mathfrak{g})\cup[\alpha_{34}]}.
\end{eqnarray*}
In order to prove that the triple $ABC$-Massey product $\langle\mathfrak {a}_{12},\mathfrak {a}_{23},\mathfrak {a}_{34}\rangle_{ABC}$ is non-vanishing on $\mathfrak{g}$, we will show that 
$$
[(-1)^{(n-\nu_0)(\nu_0+1)+1}\,\phi^{\nu_0+1\cdots n} \wedge  \phi^{\overline{1}\cdots \overline{n}}]\not\in H_A^{n-\nu_0, n-\nu_0}(\mathfrak{g})\cup[\alpha_{34}].
$$
Let $g=\sum_{j=1}^{n}\phi^j\otimes \overline{\phi^j}$ be the Hermitian metric on $\mathfrak{g}$, we denote by $\omega$ its associated fundamental form and by $\Omega=\frac{\omega^{n}}{n!}$ the standard volume form.\\
By contradiction, suppose that there exists an Aeppli harmonic form
$$\theta\in \textstyle\bigwedge^{n-\nu_0,\, n-\nu_0}\mathfrak{g}^{\ast},$$
and forms $R\in \textstyle\bigwedge^{n-\nu_0-1,n}\mathfrak{g}^{\ast},\quad S\in \textstyle\bigwedge^{n-\nu_0,n-1}\mathfrak{g}^{\ast}$ such that\\
\begin{equation}\label{eqfij}
(-1)^{(n-\nu_0)(\nu_0+1)+1}\,\phi^{\nu_0+1\cdots n\, \overline{1}\cdots \overline{n}}=\theta\wedge \alpha_{34}+\del R+\dbar S.
\end{equation}
\\
We multiply equation \eqref{eqfij} by
$$\ast\phi^{\nu_0+1\cdots n\,\overline{1}\dots \overline{n}}=\epsilon\,\phi^{1\cdots\nu_0}=\epsilon\,\overline{\alpha_{34}}, \qquad \epsilon\in\mathbb{R}\setminus\{0\},$$
to obtain
\begin{equation}\label{eq2}
    C\,\Omega=\big( \theta\wedge \alpha_{34} +\del R+\dbar S\big)\wedge \ast(\phi^{\nu_0+1\cdots n\, \overline{1}\cdots \overline{n}}),
\end{equation}
where $C\in\mathbb{C}\setminus\{0\}$.\\\vspace{0.1 cm}
Since $d\alpha_{34}=d\,\overline{\alpha_{34}}=0$, it is immediate to see that 
$$d^{\ast}(\phi^{\nu_0+1\cdots n\,\overline{1}\dots \overline{n}})=0,$$ 
which implies $\phi^{\nu_0+1\cdots n\,\overline{1}\dots \overline{n}}$ is Aeppli-harmonic.
As a result, equation \eqref{eq2} becomes
\begin{equation*}
  C\,\Omega=  \theta\wedge \alpha_{34}\wedge \ast(\phi^{\nu_0+1\cdots n\,\overline{1}\cdots \overline{n}})+dP
\end{equation*}
where 
$$P=\big( R+ S\big)\wedge \ast(\phi^{\nu_0+1\cdots n\,\overline{1}\cdots \overline{n}} ).$$
Since $\mathfrak{g}$ is unimodular by hypothesis, we have $d\vert_{\bigwedge^{2n-1}\mathfrak{g}^{\ast}}\equiv 0$, and hence $dP=0$. Therefore, 
\begin{align}\label{eq3}
   C\,\Omega&=  \theta\wedge  \alpha_{34}\wedge \ast(\phi^{\nu_0+1\cdots n\,\overline{1}\cdots \overline{n}})\\
   &=\epsilon\,\theta\wedge  \alpha_{34}\wedge \overline{ \alpha_{34}}\nonumber\\
   &=\epsilon(-1)^{\nu_{0}^{2}}\,\theta\wedge \phi^{1\cdots \nu_0\,\overline{1}\cdots \overline{\nu_0}}\nonumber
\end{align}
We take the inner product of equation \eqref{eq3} by $\Omega$, and we obtain
\[
0\neq C||\Omega||^2=\epsilon(-1)^{\nu_{0}^{2}}\,\langle \phi^{1\cdots \nu_0\,\overline{1}\cdots \overline{\nu_0}}\wedge \ast (\ast \theta),\Omega\rangle=\epsilon(-1)^{\nu_{0}^{2}}\,\langle \phi^{1\cdots \nu_0\,\overline{1}\cdots \overline{\nu_0}},\ast\theta\rangle||\Omega||^2.
\]
\noindent {\bf Claim.}
\[
\langle\phi^{1\cdots \nu_0\,\overline{1}\cdots \overline{\nu_0}},\ast \theta\rangle=0.
\]
In order to prove the claim, we will show that 
\begin{equation*}
 \phi^{1\cdots \nu_0\,\overline{1}\cdots \overline{\nu_0}}  \in \del\dbar(\textstyle\bigwedge^{\nu_0-1,\nu_0-1}\mathfrak{g}^{\ast}),
\end{equation*}
which will lead to a contradiction, as $C$ and $\epsilon$ are non-zero constants, $\Omega$ is a volume form, and by Lemma \ref{HodgeDec} we have that
$$\mathcal{H}_{\Delta_{BC}}^{\nu_0,\nu_0}(\mathfrak{g})\perp \text{im}(\del\dbar_{|\bigwedge^{\nu_0-1,\nu_0-1}\mathfrak{g}^{\ast}}).$$
Indeed, since
\begin{equation*}
\xi_{\nu_0}\ne 0,
\end{equation*}
there exists $\mu_0\in\{1,\dots, r\}$ such that
\[
a_{\nu_0\mu_0}\neq 0.
\]
By a direct computation, we get
\begin{equation*}
 \del\dbar\, \phi^{1\cdots\hat{\mu_0}\cdots  \nu_0\, \overline{1} \cdots\hat{\overline{\mu_0}}\cdots \overline{\nu_0}}=(-1)^{\nu_0-1}\del\,\phi^{1\cdots\hat{\mu_0}\cdots  \nu_0}\wedge \dbar\,\phi^{\overline{1} \cdots\hat{\overline{\mu_0}}\cdots \overline{\nu_0}}=(-1)^{\nu_0-1}|a_{\nu_0\mu_0}|^2\phi^{1\cdots \nu_0\, \overline{1}\cdots \overline{\nu_0}}.
\end{equation*}
Hence,
\begin{equation*}
    \phi^{1\cdots \nu_0\, \overline{1}\cdots \overline{\nu_0}}=\del\dbar\, \big(\frac{(-1)^{\nu_0-1}}{|a_{\nu_0\mu_0}|^{2}}  \,  \phi^{1\cdots\hat{\mu_0}\cdots  \nu_0\, \overline{1} \cdots\hat{\overline{\mu_0}}\cdots \overline{\nu_0}}\big).
\end{equation*}
\end{proof}
As a consequence, we obtain the following.
\begin{thm}\label{thm:ABC-solv-unimod}
Let $X$ be a compact complex parallelisable solvmanifold. Then, $X$ admits a non-vanishing triple ABC-Massey product.
\end{thm}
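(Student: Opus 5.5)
The plan is to reduce the statement to Theorem \ref{thm:ABC-unimod} by passing from the compact quotient to the Lie algebra and back. Write $X=\Gamma\backslash G$, where $G$ is a simply connected complex solvable Lie group with Lie algebra $\mathfrak{g}$ and $\Gamma$ is a lattice. As throughout the section, $G$ is taken non-Abelian, so $X$ is not a complex torus. The complex structure $J$ is the left-invariant one coming from multiplication by $i$ on $\mathfrak{g}$, so $(\mathfrak{g},J)$ is exactly the setting of the structure equations \eqref{eq:cplx-par-solv}.

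The first step is to check that $\mathfrak{g}$ is unimodular, so that Theorem \ref{thm:ABC-unimod} applies. This is the classical fact that a connected Lie group admitting a lattice is unimodular. The argument: the left Haar measure on $G$ descends to a finite $G$-invariant measure on $\Gamma\backslash G$, which forces the modular function $|\det \mathrm{Ad}_g|$ to be trivial. Hence $\mathrm{tr}(\mathrm{ad}_Z)=0$ for all $Z\in\mathfrak{g}$. This holds both for the real trace and for the complex trace: $\mathrm{ad}_Z$ is $\C$-linear, its real trace is $2\,\mathrm{Re}$ of its complex trace, and applying the condition to $iZ$ as well gives vanishing of the complex trace. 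That complex-trace condition is exactly what Lemma \ref{Lemma2} uses. Theorem \ref{thm:ABC-unimod} then provides Bott--Chern classes $\mathfrak{a}_{12},\mathfrak{a}_{23},\mathfrak{a}_{34}$ on $(\mathfrak{g},J)$ whose triple ABC--Massey product is well defined and non-vanishing in the quotient of $H_A(\mathfrak{g})$. If $\mathfrak{g}$ is nilpotent, Theorem \ref{thm:ABC-unimod-abel-nil} covers this case directly.

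The second step transfers the product to $X$ via the injective map $\iota$ from \cite[Lemma 2.1]{ST1}. The invariant representatives $\alpha_{12},\alpha_{23},\alpha_{34},f_{13},f_{24}$ define global forms on $X$, and the same Massey representative is obtained. Injectivity of $\iota$ means that non-vanishing in the Lie-algebra quotient implies non-vanishing in
\[
H_A(X)\big/\big(\mathfrak{a}_{12}\cup H_A(X)+H_A(X)\cup\mathfrak{a}_{34}\big).
\]

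This second step is where I expect the main obstacle. Invariant cohomology need not compute $H_{BC}(X)$ and $H_A(X)$ for solvmanifolds, and the indeterminacy on $X$ is a priori larger. The content of \cite[Lemma 2.1]{ST1} is that the inclusion $\bigwedge^{\bullet,\bullet}\mathfrak{g}^*\hookrightarrow\mathcal{A}^{\bullet,\bullet}(X)$ is injective on Bott--Chern and Aeppli cohomology, and is compatible with products, via the averaging (symmetrization) map over $\Gamma\backslash G$ with respect to the invariant volume, which again needs unimodularity. Averaging commutes with $\del$, $\dbar$, and with wedge by invariant forms.

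So, should the cited lemma need unpacking, I would argue directly. Suppose the Massey representative on $X$ equals $\mathfrak{a}_{12}\cup a+b\cup\mathfrak{a}_{34}$ plus $\del R+\dbar S$. Applying the averaging map $\mu$ produces the same kind of relation with invariant $\mu(a),\mu(b),\mu(R),\mu(S)$. This contradicts Theorem \ref{thm:ABC-unimod}.
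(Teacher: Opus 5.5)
Your proposal is correct and follows the paper's route: apply Theorem \ref{thm:ABC-unimod} to $\mathfrak{g}$, then transfer the product to $X$ through the injective map $\iota$ of \cite[Lemma 2.1]{ST1}. Two of your steps are left implicit in the paper, and both are sound as you give them: deriving unimodularity (with vanishing complex trace) from the existence of a lattice, and using averaging to show that the larger indeterminacy on $X$ cannot absorb the invariant Massey representative.
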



\section{Examples}\label{sec:examples}
In this section, we provide an explicit family of compact complex parallelisable solvmanifolds to which Theorem \ref{thm:ABC-solv-unimod} applies. More precisely, we construct a family of compact quotients of semidirect products of the form $\C\ltimes_{\rho}\C^{2n}$ by exhibiting suitable lattices.\\
\indent Let $M\in \text{SL}(2;\Z)$ be a matrix with positive eigenvalues $e^{\lambda}, e^{-\lambda}\in \R$. In particular, there exists $P\in\text{GL}(2;\R)$ such that
\[
P\cdot M\cdot P^{-1}=\text{diag}(e^{\lambda}, e^{-\lambda})=:D.
\]
Let us fix the complex Lie group $$G=:\C\ltimes_{\rho} \C^{2n},$$ where
\[
\rho(z)=\text{diag}(e^{\lambda z}, e^{-\lambda z}\dots, e^{\lambda z}, e^{-\lambda z}), \qquad z\in \C,
\]
so that the product $\ast$ on $G$ is given by
\begin{align*}
(y_0,y_1,&\dots, y_{2n})\ast (z_0,z_1,\dots, z_{2n})\\
&=(z_0+y_0,e^{\lambda y_0}z_1+y_1,e^{-\lambda y_0}z_2+y_2 \dots, e^{\lambda y_{0}}z_{2n-1}+y_{2n-1},e^{-\lambda y_0}z_{2n}+y_{2n}).  
\end{align*}
Now, we want to construct a splitting type lattice $$\Gamma=\Gamma'\ltimes_{\rho} \Gamma''.$$ First, we choose  $\Gamma'\subset\mathbb{C}$ so that the image under $\rho$ is given by integer powers of $D$. Thus, for $a_0\in\mathbb Z$, we require
\[
e^{\lambda z}=e^{\lambda a_0},
\]
which gives
\[
\lambda z-\lambda a_0=2\pi \sqrt{-1} b_0,
\qquad b_0\in\mathbb Z,
\]
and hence
\[
z=a_0+\frac{2\pi \sqrt{-1}}{\lambda}b_0.
\]
Therefore, we set
\[
\Gamma':=\left\{\left(a_0+\sqrt{-1}\frac{2\pi}{\lambda} b_0\right): a_0,b_0\in \Z \right\}\subset \C.
\]
Then, for every $z\in \Gamma'$,
\[
e^{\lambda z}=e^{\lambda a_{0}}e^{2\pi\sqrt{-1}b_{0}}=e^{\lambda a_{0}},
\] 
and similarly, $e^{-\lambda z}=e^{-\lambda a_{0}}.$
Thus, 
\[
\rho(z)=\text{diag}(e^{\lambda a_{0}}, e^{-\lambda a_{0}}, \dots, e^{\lambda a_{0}}, e^{-\lambda a_{0}})=\text{diag}(D^{a_{0}},\dots, D^{a_{0}}).
\]
We now define the lattice in the second factor $\mathbb{C}^{2n}$ by
\[\Gamma'':=\left\{\left( ^t(P\begin{pmatrix}
    a_1 + \sqrt{-1}b_1\\
    a_2+\sqrt{-1}b_2
\end{pmatrix}),\dots,  ^t(P\begin{pmatrix}
    a_{2n-1} + \sqrt{-1}b_{2n-1}\\
    a_{2n}+\sqrt{-1}b_{2n}
\end{pmatrix})\right) \in\C^{2n}: a_{i},b_{i}\in\mathbb{Z},\, i=1,\dots,2n \right\}\subset \C^{2n}.
\]
The condition $\rho(\Gamma')(\Gamma'')\subset\Gamma''$ follows from the relation $DP=PM,$ which yields
\[
D^{a_{0}}P= PM^{a_{0}}.
\]
Since $M\in \text{SL}(2;\Z)$, we have 
\[
M^{a_{0}}(\mathbb{Z}^{2})=\mathbb{Z}^{2}.
\]
Therefore, 
\[
D^{a_{0}}(P(\mathbb{Z}^{2}))=PM^{a_{0}}(\mathbb{Z}^{2})=P(\mathbb{Z}^{2}).
\]
Hence, $\Gamma=\Gamma'\ltimes_{\rho} \Gamma''$ is a lattice in $G$.\\
\indent Denoting by $J$ the invariant complex structure induced by $G$, the manifold $X=(\Gamma\backslash G,J)$ is a compact complex parallelisable manifold of complex dimension $2n+1$.\\
The Lie algebra $\mathfrak{g}$ of $G$ splits as $\C\ltimes_{\gamma} \C^{2n}$, where $\gamma$ is infinitesimally induced by the map $\rho$. From the expression of $\rho$ it is easy to see that the forms
\[
\phi^0:=dz_0, \qquad \phi^{2i-1}:=e^{-\lambda z_0}dz_{2i-1}, \qquad \phi^{2i}:=e^{\lambda z_0}dz_{2i}, \qquad i\in\{1,\dots, n\},
\]
are a basis of left-invariant $(1,0)$-forms on $G$, which defines a basis of $(\mathfrak{g}_{\C}^*)^{1,0}$ and, moreover, it descends to $X$. In particular, the structure equations are
\[
d\phi^0=0, \qquad d\phi^{2i-1}=-\lambda\phi^{0}\wedge\phi^{2i-1}, \qquad d\phi^{2i}=\lambda\phi^{0}\wedge\phi^{2i}, \qquad i\in\{1,\dots, n\}.
\]
Finally, as an application of Theorem \ref{thm:ABC-solv-unimod}, we obtain
\begin{align*}
\langle [\lambda\,\phi^{02\cdots 2n}],[\lambda\,\phi^{\overline{0}\,\overline{2}\cdots\overline{2n}}],&[\phi^{\overline{0}\,\overline{1}}]\rangle_{ABC}\\
&=[\phi^{2\cdots 2n\,\overline{0}\,\overline{1}\,\overline{2}\cdots\overline{2n}}]_{A}\in \frac{H_A^{2n-1,2n+1}(X)}{H_A^{2n-1, 2n-1}(X)\cup[\phi^{\overline{0}\,\overline{1}}]},
\end{align*}
which is an explicit non-vanishing triple ABC-Massey product on the compact complex parallelisable manifold $X=(\Gamma\backslash G,J)$.

\end{document}